\theoremstyle{plain}
\newtheorem{theorem}{Theorem}[section]
\newtheorem{corollary}{Corollary}[section]
\newtheorem{proposition}{Proposition}[section]
\theoremstyle{definition}
\newtheorem{definition}{Definition}[section]
\newtheorem{remark}{Remark}[section]
\begin{document}

\title{A note on para-holomorphic Riemannian Einstein manifolds}
\author{Cristian Ida, Alexandru Ionescu and Adelina Manea}
\date{}
\maketitle
\begin{abstract}
The aim of this note is the study of Einstein condition for para-holomorphic Riemannian metrics in the para-complex geometry framework. Firstly, we make some general considerations about para-complex Riemannian manifolds (not necessarily para-holomorphic). Next, using an one-to-one correspondence between para-holomorphic Riemannian metrics and para-K\"{a}hler Norden metrics, we study the Einstein condition for a para-holomorphic Riemannian metric and the associated  real para-K\"{a}hler Norden metric on a para-complex manifold. Finally, it is shown that every semi-simple para-complex Lie group inherits a natural para-K\"{a}hlerian Norden Einstein metric. 
\end{abstract}

\medskip 
\begin{flushleft}
\strut \textbf{2010 Mathematics Subject Classification:} 53C15, 53C25, 53C56. 

\textbf{Key Words:} para-complex manifold, para-Norden metric, para-holomorphic Riemannian metric, Einstein metric.
\end{flushleft}

\section{Introduction}
\setcounter{equation}{0}
A (holomorphic) complex Riemannian manifold is a complex manifold $M$, together with a (holomorphic) complex tensor field $G$ that is a complex scalar product (i.e., nondegenerate, symmetric, $\mathbb{C}$-bilinear form) on each
holomorphic tangent space of $M$. Geometrical aspects of the complex Riemannian manifolds with analytic (holomorphic) metrics and their applications to mathematical physics have been investigated by many authors, see for instance \cite{Du-Z, Ga-Bo, Ga-Iv1, Ga-Iv2, Iv, LeB, Ma, Pe, Wo}. The holomorphic Riemannian geometry possesses an underlying real geometry consisting of a pseudo-Riemannian metric of neutral signature for which the (integrable) almost complex structure tensor is anti-orthogonal. This leads to the notion of an anti-K\"{a}hlerian manifold (also known as K\"{a}hler-Norden manifold \cite{I-Sa, O1, O2, S} or $B$-manifold \cite{Ga-Bo}) that is a complex manifold with an anti-Hermitian metric and a parallel almost complex structure. In \cite{B-F-V}, it is proved that a metric on such a manifold must be the real part of a holomorphic metric. There is studied the Einstein condition for anti-K\"{a}hlerian metrics and it is shown that the complexification of a given Einstein metric leads to a method of generating new solutions of Einstein equations from a given one. Some generalized Einstein conditions on holomorphic Riemannian manifolds are studied in \cite{O2}.  

Although the almost product Einstein manifolds are studied in \cite{B-F-F-V}, our aim in this note is to formulate some analogue results as in \cite{B-F-V, O2} concerning the Einstein condition for para-holomorphic Riemannian metrics in terms of para-complex geometry.

The notion of almost para-complex structure (or almost product structure) on a smooth manifold was introduced in \cite{Lib1} and a survey of further results on para-complex geometry (including para-Hermitian and para-K\"{a}hler geometry) can be found for instance in \cite{Cru1, Cru2}. Also, other further signifiant developments are due in some recent surveys \cite{A-M-T, Cor1, Cor2}, where some aspects concerning the geometry of para-complex manifolds are presented sistematically by analogy with the geometry of complex manifolds using some para-holomorphic coordinate systems. See also \cite{Er2, Ka, Kr, La-S}.

The paper is organized as follows. In the Section 2, following \cite{A-M-T, Cor1, Ka}, we brief recall some basic notions used in the para-complex geometry. In Section 3, we define para-complex Riemannian metrics on para-complex manifolds, we prove that the real part of such a metric is a para-Norden (or almost product Riemannian) metric and following the construction from the complex case \cite{Ga-Iv1, Ga-Iv2}, we make some general considerations about the Levi-Civita and characteristic connections on para-complex Riemannian manifolds. Also, a  Schur type theorem concerning the para-holomorphic curvature is presented and we write Einsten equations in our setting.   In Section 4, starting from an one-to-one coorespondence between para-holomorphic Riemannian metrics and real para-K\"{a}hler Norden metrics on a para-complex manifold, we prove an  equivalence between Einstein condition with real constant for para-holomorphic Riemannian metrics and for the associated para-K\"{a}hler metric, giving an analogue result from the case of anti-K\"{a}hlerian Einstein manifolds \cite{B-F-F-V, B-F-V}. Also, the case when the para-holomorphic Riemannian metric is Einstein with a para-complex constant is also analysed in the similar manner with the case of K\"{a}hler-Norden metrics \cite{O2}. In the last Section, as an example of our study, it is shown that every semi-simple para-complex Lie group inherits a natural para-K\"{a}hlerian Norden Einstein metric. 

We notice that other problems related to generalized Einstein condition as in \cite{O2} can be adressed in the context of para-holomorphic Riemannian manifolds. Also, an important example of anti-K\"{a}hlerian Einstein metric on the tangent bundle of a space form is given in \cite{O-P} and a similar study can be also analysed in the context of para-complex geometry.  

The main methods used here are similarly and closely related to those used in the study  of complex Riemannian manifolds \cite{Ga-Iv1, Ga-Iv2, Iv} and anti-K\"{a}hlerian manifolds \cite{B-F-F-V, B-F-V, O2, S}.

\section{Preliminaries and settings in para-complex geometry}
\setcounter{equation}{0}
The algebra of para-complex numbers is defined as the vector space $C=\mathbb{R}\times\mathbb{R}$ with mulplication given by
\begin{displaymath}
(x_1,y_1)\cdot(x_2,y_2)=(x_1x_2+y_1y_2,x_1y_2+y_1x_2)\,,\,\forall\,(x_1,y_1), (x_2,y_2)\in C.
\end{displaymath}
Setting $e=(0,1)$, then $e^2=(1,0)=1$ and we can write $C=\mathbb{R}+e\mathbb{R}=\{z=x+ey\,|\,x,y\in\mathbb{R}\}$.

The conjugation of an element $z=x+ey\in C$ is defined as usual by $\overline{z}=x-ey$ and $Re \,z=x$ and $Im\,z=y$ are called the \textit{real part} and \textit{imaginary part} of the para-complex number $z$. 

A \textit{para-complex} structure on a real finite dimensional vector space $V$ is defined as an endomorphism $I\in End(V)$ which satisfy $I^2=Id$, $I\neq\pm Id$ and the following two eigenspaces $V^{\pm}:=\ker(Id\pm I)$ corresponding to the eigenvalues $\pm1$ of $I$ have the same dimension. Such a pair $(V,I)$ it is called a \textit{para-complex vector space}. Consequently, an \textit{almost para-complex structure} on a smooth manifold $M$ is defined as an endomorphism $I\in End(TM)$ with the property that $(T_xM,I_x)$ is a para-complex vector space, for every $x\in M$. Moreover, an almost para-complex structure $I$ on $M$ is said to be \textit{integrable} if the distributions $T^\pm M=\ker(Id\mp I)$ are both integrable, and in this case $I$ is called a \textit{para-complex structure} on $M$. A manifold $M$ endowed with a para-complex struture is called a \textit{para-complex manifold}. The para-complex dimension of a para-complex manifold $M$ is the integer $n = \dim_CM := (\dim_\mathbb{R}M)/2$.

Given two almost para-complex manifolds $(M,I_M)$ and $(N,I_N)$, a smooth map $f:(M,I_M)\rightarrow(N,I_N)$ is called \textit{para-holomorphic (respectively: anti-para-holomorphic)} if
\begin{equation}
df\circ I_M=I_N\circ df\,\,({\rm respectively:} \,\,df\circ I_M=-I_N\circ df).
\label{I1}
\end{equation} 
Moreover, an (anti-)para-holomorphic map $f:(M,I_M)\rightarrow C$ is called \textit{(anti)-para-holomorphic function}.

As usual, the Nijenhuis tensor $N_I$ associated to an almost para-complex structure $I$ is defined by 
\begin{equation}
\label{I2}
N_I(X,Y):=[IX,IY]-I[IX,Y]-I[X,IY]+[X,Y],
\end{equation}
for every $X,Y\in\Gamma(TM)$, and according to \cite{Cor1} we have that $I$ is integrable iff $N_I=0$. The Frobenius theorem implies, see \cite{Cor2}, the existence of local coordinates $(z_+^a,z_-^a)$, $a=1,\ldots,n=\dim_CM$ on para-complex manifold $(M,I)$, such that $T^{+}M=span\{\partial/\partial z_{+}^a\}$, $T^-M=span\{\partial/\partial z_{-}^a\}$, $a\in\{1,\ldots,n\}$. Such (real) coordinates are called \textit{adapted coordinates} for the para-complex structure $I$. 

As in the complex case, on every para-complex manifold $(M,I_M)$ we can define an atlas of para-holomorphic local charts $(U_\alpha,\varphi_\alpha)$, such that the transition functions $\varphi_\beta\circ\varphi_\alpha^{-1}:\varphi_\alpha(U_\alpha\cap U_\beta)\subset C^n\rightarrow\varphi_\beta(U_\alpha\cap U_\beta)\subset C^n$ are para-holomorphic functions in the sense of \eqref{I1}. Moreover, with every (real) adapted coordinate system $(z_+^a,z_-^a)$, $a\in\{1,\ldots,n\}$ on $U_\alpha$ we can associate a para-holomorphic system $(z^a)$, $a=1,\ldots,n$ by setting
\begin{equation}
\label{I4}
z^a=\frac{z_+^a+z_-^a}{2}+e\frac{z_+^a-z_-^a}{2}:=x^a+ey^a\,,\,\,a\in\{1,\ldots,n\}.
\end{equation}
According to \cite{Cor1}, $z^a$ are para-holomorphic functions in the sense of \eqref{I1} and the transition functions between two para-holomorphic coordinate systems are also para-holomorphic. Equivalently, if $(\widetilde{z}^b)$, $b=1,\ldots n$ is a para-holomorphic coordinate system on $U_\beta$, with $\widetilde{z}^b=\widetilde{x}^b+e\widetilde{y}^b$, then the following para-Cauchy-Riemann equations hold (see for instance \cite{Ka}):
\begin{equation}
\label{para-CR}
\frac{\partial \widetilde{x}^b}{\partial x^a}=\frac{\partial \widetilde{y}^b}{\partial y^a}\,,\,\frac{\partial \widetilde{x}^b}{\partial y^a}=\frac{\partial \widetilde{y}^b}{\partial x^a}\,,\,a,b\in\{1,\ldots n\}.
\end{equation}
In this case, on each $U_\alpha$, $I$ is given by
\begin{equation}
\label{I}
I(\frac{\partial}{\partial x^a})=\frac{\partial}{\partial y^a}\,,\,I(\frac{\partial}{\partial y^a})=\frac{\partial}{\partial x^a}.
\end{equation}

Now, we consider the para-complexification of the tangent bundle $TM$ as the $\mathbb{R}$-tensor product $T_CM=TM\otimes_{\mathbb{R}}C$ and its decomposition $T_CM=T^{1,0}M\oplus T^{0,1}M$ produced by $C$-linear extension of $I$ to $T_CM$, where
\begin{displaymath}
T_x^{1,0}M=\{Z=T_{x,C}M\,|\,IZ=eZ\}=\{X+eIX\,|\,X\in T_xM\},
\end{displaymath}
\begin{displaymath}
T_x^{0,1}M=\{Z=T_{x,C}M\,|\,IZ=-eZ\}=\{X-eIX\,|\,X\in T_xM\},
\end{displaymath}
are the eigenspaces of $I$ with eigenvalues $\pm e$. Also, if $I$ is integrable, that is $(M,I)$ is a para-complex manifold, the para-complex vectors
\begin{displaymath}
\frac{\partial}{\partial z^a}=\frac{1}{2}\left(\frac{\partial}{\partial x^a}+e\frac{\partial}{\partial y^a}\right)\,,\,\frac{\partial}{\partial z^{\overline{a}}}=\frac{1}{2}\left(\frac{\partial}{\partial x^a}-e\frac{\partial}{\partial y^a}\right)
\end{displaymath}
form a basis of the spaces $T^{1,0}_xM$ and $T^{0,1}_xM$.
\begin{remark}
(\cite{Cor1}) A $C$-valued function $f:M\rightarrow C$ on a para-complex manifold $(M,I)$ is para-holomorphic iff it satisfies 
\begin{equation}
\label{I5}
\frac{\partial f}{\partial z^{\overline{a}}}=0\,,\,\,\forall \,a\in\{1,\ldots,n\},
\end{equation}
where $(z^a)$ are local para-holomorphic coordinates on $(M,I)$ and $z^{\overline{a}}=\overline{z}^a$.
\end{remark}

\section{Para-complex Riemannian manifolds}
\setcounter{equation}{0}
Let $M$ be a para-complex manifold of para-complex dimension $n$ and denote by $(M,I)$ the manifold considered as a real $2n$-dimensional manifold with the induced almost para-complex structure $I$.
\begin{definition}
A \textit{para-complex Riemannian metric} on $M$ is a covariant symmetric $2$-tensor field $G:\Gamma(T_CM)\times\Gamma(T_CM)\rightarrow C$, which is non-degenerate at each point of $M$ and satisfies
\begin{equation}
\label{m1}
G(\overline{Z}_1,\overline{Z}_2)=\overline{G(Z_1,Z_2)}\,\,\,{\rm for\,\,every}\,\,Z_1,Z_2\in\Gamma(T_CM),
\end{equation}
\begin{equation}
\label{m2}
G(Z_1,Z_2)=0\,\,\,{\rm for\,\,every}\,\,Z_1\in\Gamma(T^{1,0}M)\,\,{\rm and}\,\,Z_2\in\Gamma(T^{0,1}M).
\end{equation}
\end{definition}
It is easy to see that the relation \eqref{m2} is equivalent to
\begin{equation}
\label{m3}
G(IZ_1,IZ_2)=G(Z_1,Z_2)\,\,\,{\rm for\,\,every}\,\,Z_1,Z_2\in\Gamma(T_CM),
\end{equation}
where we have denoted again by $I$  the $C$-linear extension of $I$ to $T_CM$. Thus a para-complex Riemannian metric on $M$ is completely determined by its values on $\Gamma(T^{1,0}M)$.
\begin{definition}
The pair $(M,G)$ consisting by a para-complex manifold $M$ and a para-complex Riemannian metric $G$ on $M$, will be called a \textit{para-complex Riemannian manifold}.
\end{definition}
If  $(z^a)$, $a=1,\ldots n$ is a para-holomorphic coordinate system on $M$, such that $\Gamma(T_CM)=span\{\partial/\partial z^a, \partial/\partial z^{\overline{a}}\}$, we put
\begin{equation}
\label{m4}
G_{AB}=G\left(\frac{\partial}{\partial z^A},\frac{\partial}{\partial z^B}\right)\,,\,\,A,B\in\{1,\ldots,n,\overline{1},\ldots,\overline{n}\}.
\end{equation}
Then, for a para-complex Riemannian metric $G$, the defining conditions \eqref{m1} and \eqref{m2} can be expressed locally as 
\begin{equation}
\label{m5}
G_{\overline{A}\,\overline{B}}=\overline{G_{AB}}\,,\,G_{a\overline{b}}=G_{\overline{a}b}=0.
\end{equation}
\begin{definition}
A para-complex Riemannian metric $G$ on a para-complex manifold $M$ is called \textit{para-holomorphic Riemannian metric} if the local components $G_{ab}$ are para-holomorphic functions, i.e.
\begin{equation}
\label{m6}
\frac{\partial G_{ab}}{\partial z^{\overline{c}}}=0\,,\,\,{\rm for\,\,every}\,\,c\in\{1\ldots,n\}.
\end{equation}
In this case, the pair $(M,G)$ is called a \textit{para-holomorphic Riemannian} manifold.
\end{definition}
As in the case of complex Riemannian manifolds (see \cite{Ga-Iv1}) for a given para-complex metric $G$ on $M$, we define the tensor field $\widetilde{G}$ on $M$ by setting
\begin{equation}
\label{m7}
\widetilde{G}(Z_1,Z_2)=(G\circ I)(Z_1,Z_2):=G(IZ_1,Z_2)\,\,\,{\rm for\,\,every}\,\,Z_1,Z_2\in\Gamma(T_CM).
\end{equation}
This metric is said to be associated with $G$ (also called \textit{twin metric}), and locally, it satisfies
\begin{equation}
\label{m8}
\widetilde{G}_{ab}=eG_{ab}\,\,{\rm and}\,\,\widetilde{G}_{\overline{a}\,\overline{b}}=-eG_{\overline{a}\,\overline{b}}.
\end{equation}
Also, we notice that given a para-complex Riemannian manifold $(M,G)$, the para-complex Riemannian metric $G$ induces a real Riemannian metric $g$ on the real manifold $(M,I)$ by setting
\begin{equation}
\label{a1}
g(X,Y)=2Re\,G(\widehat{X},\widehat{Y})\,,\,X,Y\in\Gamma(TM),
\end{equation}
where $\widehat{X}=(1/2)(X+eIX),\widehat{Y}=(1/2)(Y+eIY)\in\Gamma(T^{1,0}M)$, which satisfies
\begin{equation}
\label{m9}
g(IX,IY)=g(X,Y)\,\,{\rm or,\,\,equivalently},\,\,g(IX,Y)=g(X,IY)\,\,{\rm for\,\,every}\,\,X,Y\in\Gamma(TM).
\end{equation}
Such a real metric is also known as an almost product Riemannain metric or para-Norden metric, and the para-Norden manifold $(M,I,g)$ will be called the \textit{realization} of $(M,G)$. The para-Norden manifolds are studied for instance in \cite{S-I-A, S-I-E}. 

Conversely,  every para-Norden metric on the real manifold $(M,I)$ induces a para-complex Riemannian metric on the para-complex manifold $M$ by setting
\begin{equation}
\label{a2}
G(\widehat{X},\widehat{Y})=\frac{1}{2}\left(g(X,Y)+eg(X,IY)\right),\,X,Y\in\Gamma(TM)
\end{equation}
and $\widehat{X}=(1/2)(X+eIX),\widehat{Y}=(1/2)(Y+eIY)\in\Gamma(T^{1,0}M)$ as above, and next we extend $G$ to have the conditions \eqref{m1} and \eqref{m2}, which is possible because of \eqref{m9}.

Given any linear connection $D$ on a para-complex manifold $(M,I)$, with respect to a para-holomorphic coordinate system, we put 
\begin{displaymath}
D_{\frac{\partial}{\partial z^A}}\frac{\partial}{\partial z^B}=L^C_{AB}\frac{\partial}{\partial z^C}.
\end{displaymath}
We notice that the covariant differentiation, which is defined for real vector fields in $\Gamma(TM)$, can be extended by para-complex linearity on para-complex vector fields from $\Gamma(T_CM)$. Then $L^{\overline{C}}_{\overline{A}\,\overline{B}}=\overline{L^C_{AB}}$, where $\overline{\overline{A}}=A$. 

\begin{definition}
A (real) linear connection $D$ on $(M,I)$ is called \textit{almost para-complex} if $DI=0$.
\end{definition}
\begin{definition}
A para-Norden manifold $(M,I,g)$ is called \textit{para-K\"{a}hler Norden} manifold if the Levi-Civita connection $\nabla$ of $g$ is almost para-complex.
\end{definition}
Similary to the complex case \cite{O1,O2,S} (see also, \cite{B-F-F-V, B-F-V}), we have the following one-to-one corresponce between the para-K\"{a}hler Norden metrics and para-holomorphic Riemannian metrics on a para-complex manifold $(M,I)$.
\begin{proposition}
Let $(M,I)$ be a para-complex manifold. If $G$ is a para-holomorphic Riemannian metric $(M,I)$ then $g$ defined in \eqref{a1} is a para-K\"{a}hler Norden metric on $(M,I)$, and conversely if $g$ is a para-K\"{a}hler metric on the (real) manifold $(M,I)$ then $G$ defined in \eqref{a2} is a para-holomorphic Riemannian metric on $(M,I)$.
\end{proposition}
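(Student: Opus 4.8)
The plan is to work in a para-holomorphic chart $(z^a)$, $z^a=x^a+ey^a$, and to reduce both implications to one computation with the Christoffel symbols of $g$ in the frame $\{\partial/\partial z^A\}$, $A\in\{1,\dots,n,\overline 1,\dots,\overline n\}$. First I would record the elementary but crucial fact that, for a para-Norden metric $g$ and its associated para-complex Riemannian metric $G$ (related by the mutually inverse correspondences \eqref{a1}--\eqref{a2}), the para-complex bilinear extension $g^{C}$ of $g$ to $T_{C}M$ coincides with $G$. Indeed, for $X,Y\in\Gamma(TM)$, using $\widehat X=\tfrac12(X+eIX)$, $e^{2}=1$ and the para-Norden identities \eqref{m9},
\begin{displaymath}
g^{C}(\widehat X,\widehat Y)=\tfrac14\bigl(g(X,Y)+g(IX,IY)+e\,g(X,IY)+e\,g(IX,Y)\bigr)=\tfrac12\bigl(g(X,Y)+e\,g(X,IY)\bigr)=G(\widehat X,\widehat Y),
\end{displaymath}
and since both $g^{C}$ and $G$ vanish on $\Gamma(T^{1,0}M)\times\Gamma(T^{0,1}M)$ and satisfy \eqref{m1}, they agree on all of $T_{C}M$; in particular $(g^{C})_{ab}=G_{ab}$. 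Consequently the $C$-linear extension $\nabla$ of the Levi-Civita connection of $g$ is the unique torsion-free connection on $T_{C}M$ with $\nabla G=0$, and since $[\partial/\partial z^{A},\partial/\partial z^{B}]=0$ its coefficients $L^{C}_{AB}$ (with $L^{\overline C}_{\overline A\,\overline B}=\overline{L^{C}_{AB}}$) obey the Koszul identity
\begin{displaymath}
2\,G_{CD}\,L^{D}_{AB}=\frac{\partial G_{BC}}{\partial z^{A}}+\frac{\partial G_{AC}}{\partial z^{B}}-\frac{\partial G_{AB}}{\partial z^{C}}.
\end{displaymath}

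Next I would translate the para-K\"ahler Norden condition $\nabla I=0$ into a statement about these coefficients. Since $I\,\partial/\partial z^{a}=e\,\partial/\partial z^{a}$ and $I\,\partial/\partial z^{\overline a}=-e\,\partial/\partial z^{\overline a}$, a direct computation gives
\begin{displaymath}
\Bigl(\nabla_{\partial/\partial z^{A}}I\Bigr)\frac{\partial}{\partial z^{b}}=2e\,L^{\overline c}_{Ab}\,\frac{\partial}{\partial z^{\overline c}},\qquad \Bigl(\nabla_{\partial/\partial z^{A}}I\Bigr)\frac{\partial}{\partial z^{\overline b}}=-2e\,L^{c}_{A\overline b}\,\frac{\partial}{\partial z^{c}},
\end{displaymath}
so $\nabla I=0$ is equivalent to the vanishing of all mixed-type coefficients $L^{\overline c}_{Ab}$ and $L^{c}_{A\overline b}$. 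Feeding $G_{a\overline b}=G_{\overline a b}=0$ from \eqref{m5} into the Koszul identity yields, for purely unbarred lower indices,
\begin{displaymath}
2\,\overline{G_{cd}}\,L^{\overline d}_{ab}=-\frac{\partial G_{ab}}{\partial z^{\overline c}},
\end{displaymath}
and since $(\overline{G_{cd}})$ is invertible (as $G$ is non-degenerate and block-diagonal by \eqref{m5}), this shows $L^{\overline d}_{ab}=0$ for all $d$ if and only if $\partial G_{ab}/\partial z^{\overline c}=0$ for all $c$, i.e. iff $G$ is para-holomorphic in the sense of \eqref{m6}. It then remains to check that para-holomorphicity of $G$ forces the remaining mixed coefficients to vanish too: similarly $2\,\overline{G_{cd}}\,L^{\overline d}_{\overline a b}=\partial G_{\overline a\overline c}/\partial z^{b}=\overline{\partial G_{ac}/\partial z^{\overline b}}$ (using $\overline{\partial f/\partial z^{\overline b}}=\partial\overline f/\partial z^{b}$ and $G_{\overline a\overline c}=\overline{G_{ac}}$) and $2\,G_{cd}\,L^{d}_{a\overline b}=\partial G_{ac}/\partial z^{\overline b}$, both vanishing once $\partial G_{ac}/\partial z^{\overline b}=0$; and $L^{c}_{\overline a\overline b}=\overline{L^{\overline c}_{ab}}=0$ by the conjugation rule. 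Hence $\nabla I=0$ holds if and only if $\partial G_{ab}/\partial z^{\overline c}=0$ for all $a,b,c$, that is, if and only if $G$ is a para-holomorphic Riemannian metric.

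Finally, since \eqref{a1} and \eqref{a2} are mutually inverse correspondences between para-complex Riemannian metrics and para-Norden metrics on $(M,I)$, the equivalence just established is exactly the content of the Proposition: starting from a para-holomorphic $G$, the metric $g$ of \eqref{a1} satisfies $g^{C}=G$, hence $\nabla I=0$ and $g$ is para-K\"ahler Norden; starting from a para-K\"ahler Norden $g$, the metric $G$ of \eqref{a2} satisfies $G=g^{C}$ and $\nabla I=0$, hence is para-holomorphic. The only genuine subtlety in this argument is the bookkeeping around conjugation --- establishing $g^{C}=G$ and the identity $\overline{\partial f/\partial z^{\overline b}}=\partial\overline f/\partial z^{b}$ --- after which the proof is a short block computation with the Koszul formula; one could instead carry it out entirely in the real adapted coordinates $(x^{a},y^{a})$ via \eqref{I}, at the cost of a messier but equally routine calculation with the real Christoffel symbols.
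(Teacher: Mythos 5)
Your argument is correct and is essentially the route the paper takes implicitly: the paper states this Proposition without proof, but its content is exactly the equivalence (ii)$\Leftrightarrow$(iii) of Theorem \ref{tm2}, which rests on the same frame computation of the mixed Christoffel symbols $\Gamma^{\overline{c}}_{ab}$, $\Gamma^{c}_{\overline{a}b}$ in \eqref{m12}--\eqref{m13} and on Proposition 3.2 characterizing almost para-complex connections by the vanishing of mixed-type coefficients. Your only addition is the explicit verification that the $C$-bilinear extension $g^{C}$ coincides with $G$ (so that the Levi-Civita connections of $g$ and $G$ may be identified), a point the paper tacitly assumes; this is a worthwhile clarification but not a different method.
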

By direct calculus, we easy obtain
\begin{proposition}
A linear connection $D$ on $M$ is almost para-complex iff $L^{\overline{c}}_{ab}=L^c_{a\overline{b}}=0$ 
\end{proposition}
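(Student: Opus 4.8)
The plan is to compute the components of the tensor $DI$ in a para-holomorphic frame $\{\partial/\partial z^a,\partial/\partial z^{\overline a}\}$ and read off exactly when they vanish. First I would record how the $C$-linear extension of $I$ acts on this frame: since $\partial/\partial z^a\in\Gamma(T^{1,0}M)$ and $\partial/\partial z^{\overline a}\in\Gamma(T^{0,1}M)$ are eigenvectors of $I$ with eigenvalues $+e$ and $-e$ respectively, $I$ has \emph{constant} components in this frame, namely $I(\partial/\partial z^a)=e\,\partial/\partial z^a$ and $I(\partial/\partial z^{\overline a})=-e\,\partial/\partial z^{\overline a}$. Because $D$ is extended to $\Gamma(T_CM)$ by para-complex linearity, the condition $DI=0$ on real vector fields is equivalent to $D_X(IY)=I(D_XY)$ for all para-complex $X,Y$, so it is legitimate to test it on the frame vectors $\partial/\partial z^A$.

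Next I would use the Leibniz rule for the connection induced on $\mathrm{End}(T_CM)$, that is
\begin{displaymath}
(D_{\partial/\partial z^A}I)(\partial/\partial z^B)=D_{\partial/\partial z^A}\bigl(I\,\partial/\partial z^B\bigr)-I\bigl(D_{\partial/\partial z^A}\partial/\partial z^B\bigr),
\end{displaymath}
together with the constancy of the components of $I$ and the defining relation $D_{\partial/\partial z^A}\partial/\partial z^B=L^C_{AB}\,\partial/\partial z^C$. A short computation with $B=b$ unbarred gives $(D_{\partial/\partial z^A}I)(\partial/\partial z^b)=2e\,L^{\overline c}_{Ab}\,\partial/\partial z^{\overline c}$, and with $B=\overline b$ barred gives $(D_{\partial/\partial z^A}I)(\partial/\partial z^{\overline b})=-2e\,L^{c}_{A\overline b}\,\partial/\partial z^{c}$. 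Since $e$ is invertible in $C$, vanishing of these expressions is equivalent to vanishing of the displayed coefficients.

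Hence $DI=0$ is equivalent to $L^{\overline c}_{Ab}=0$ and $L^{c}_{A\overline b}=0$ for all $A\in\{1,\dots,n,\overline 1,\dots,\overline n\}$, i.e. to the four families $L^{\overline c}_{ab}=0$, $L^{\overline c}_{\overline a b}=0$, $L^{c}_{a\overline b}=0$, $L^{c}_{\overline a\overline b}=0$. Finally I would invoke the reality relation $L^{\overline C}_{\overline A\,\overline B}=\overline{L^{C}_{AB}}$ recorded before the statement: conjugating $L^{\overline c}_{\overline a b}=0$ yields $L^{c}_{a\overline b}=0$, and conjugating $L^{c}_{\overline a\overline b}=0$ yields $L^{\overline c}_{ab}=0$, so the four conditions collapse to the two claimed ones $L^{\overline c}_{ab}=L^{c}_{a\overline b}=0$; conversely these two, together with the reality relation, give back all four. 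The argument is a routine index computation, and the only point that needs a little care is the bookkeeping of barred versus unbarred indices and the use of the conjugation symmetry to reduce the four vanishing conditions to the two stated in the proposition.
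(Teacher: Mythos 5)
Your computation is correct and is precisely the ``direct calculus'' the paper alludes to without writing out: expand $(D_{\partial/\partial z^A}I)(\partial/\partial z^B)$ using the eigenvalues $\pm e$ of $I$ on the frame, obtain the factors $2e\,L^{\overline c}_{Ab}$ and $-2e\,L^{c}_{A\overline b}$, and use the reality relation $L^{\overline C}_{\overline A\,\overline B}=\overline{L^{C}_{AB}}$ to collapse the four vanishing conditions to the two stated. The bookkeeping, including the remark that $2e$ is invertible in $C$ despite $C$ having zero divisors, is all in order.
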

Now, let us denote by $\nabla$ and $\widetilde{\nabla}$ the Levi-Civita connections of $G$ and $\widetilde{G}$, respectively. Then, as usual, the Christoffel symbols of $G$ are given by
\begin{equation}
\label{m10}
\Gamma^C_{AB}=\frac{1}{2}G^{CD}\left(\frac{\partial G_{BD}}{\partial z^A}+\frac{\partial G_{AD}}{\partial z^B}-\frac{\partial G_{AB}}{\partial z^D}\right),
\end{equation}
where $(G^{AB})_{n\times n}$ denotes the inverse matrix of $(G_{AB})_{n\times n}$, and similarly for the Christoffel symbols $\widetilde{\Gamma}^C_{AB}$ of $\widetilde{G}$. 

Taking into account \eqref{m5} and \eqref{m8}, we have the following relations which relates the Christoffel symbols of $G$ and $\widetilde{G}$, respectively
\begin{equation}
\label{m12}
\widetilde{\Gamma}^c_{ab}=\Gamma^c_{ab}=\frac{1}{2}G^{cd}\left(\frac{\partial G_{bd}}{\partial z^a}+\frac{\partial G_{ad}}{\partial z^b}-\frac{\partial G_{ab}}{\partial z^d}\right)
\end{equation}
\begin{equation}
\label{m13}
\widetilde{\Gamma}^{\overline{c}}_{ab}=-\Gamma^{\overline{c}}_{ab}=\frac{1}{2}G^{\overline{c}\,\overline{d}}\frac{\partial G_{ab}}{\partial z^{\overline{d}}}\,\,,\,\,\widetilde{\Gamma}^{c}_{\overline{a}b}=\Gamma^{c}_{\overline{a}b}=\frac{1}{2}G^{cd}\frac{\partial G_{bd}}{\partial z^{\overline{a}}}.
\end{equation}
By analogy with the complex case \cite{Ga-Iv1}, we define the fundamental  tensor $\Phi$ on a para-complex Riemannian manifold by setting
\begin{equation}
\label{m14}
\Phi(Z_1,Z_2)=\widetilde{\nabla}_{Z_1}Z_2-\nabla_{Z_1}Z_2\,,\,\,{\rm for\,\,every}\,\,Z_1,Z_2\in\Gamma(T_CM).
\end{equation}
By this definition, we deduce
\begin{equation}
\label{m15}
\Phi(\overline{Z}_1,\overline{Z}_2)=\overline{\Phi(Z_1,Z_2)}\,,\,\,{\rm for\,\,every}\,\,Z_1,Z_2\in\Gamma(T_CM).
\end{equation} 
Using \eqref{m14}, \eqref{m12}, \eqref{m13} and \eqref{m15} the nonvanishing components of the fundamental tensor $\Phi$ are given by
\begin{equation}
\label{m16}
\Phi^{\overline{c}}_{ab}=G^{\overline{c}\,\overline{d}}\frac{\partial G_{ab}}{\partial z^{\overline{d}}}\,\,{\rm and}\,\,\Phi^c_{\overline{a}\,\overline{b}}=\overline{\Phi^{\overline{c}}_{ab}}.
\end{equation}
Also, from \eqref{m14} and \eqref{m16} we have
\begin{proposition}
The fundamental tensor of a para-complex Riemannian manifold $(M,G)$ satisfy
\begin{equation}
\label{m17}
\Phi(Z_1,Z_2)=\Phi(Z_2,Z_1)\,,\,\Phi(IZ_1,Z_2)=-I\Phi(Z_1,Z_2)\,,\,\,\forall\,Z_1,Z_2\in\Gamma(T_CM).
\end{equation}
\end{proposition}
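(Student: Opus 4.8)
\medskip
The plan is to treat the two identities separately, exploiting that $\Phi$, being the difference $\widetilde{\nabla}-\nabla$ of two linear connections, is a genuine $C$-bilinear tensor field on $M$; consequently each identity may be checked either on the adapted frame $\{\partial/\partial z^a,\partial/\partial z^{\overline a}\}$ of $T_CM$, or read off directly from the local components \eqref{m16}. Note also that $\widetilde{G}$ is non-degenerate (since $G$ is non-degenerate and $I$ is invertible) and symmetric (since $G$ is symmetric and $G(IZ_1,Z_2)=G(Z_1,IZ_2)$ by \eqref{m3} together with $I^2=\mathrm{Id}$), so its Levi-Civita connection $\widetilde{\nabla}$, just like $\nabla$, has vanishing torsion.

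For the symmetry $\Phi(Z_1,Z_2)=\Phi(Z_2,Z_1)$, I would subtract the two torsion identities:
\begin{equation*}
\Phi(Z_1,Z_2)-\Phi(Z_2,Z_1)=\bigl(\widetilde{\nabla}_{Z_1}Z_2-\widetilde{\nabla}_{Z_2}Z_1\bigr)-\bigl(\nabla_{Z_1}Z_2-\nabla_{Z_2}Z_1\bigr)=[Z_1,Z_2]-[Z_1,Z_2]=0 .
\end{equation*}
Alternatively, the symmetry is immediate from \eqref{m16}: $\Phi^{\overline c}_{ab}=G^{\overline c\,\overline d}\,\partial G_{ab}/\partial z^{\overline d}$ is symmetric in $a,b$ because $G_{ab}=G_{ba}$, likewise $\Phi^{c}_{\overline a\,\overline b}$ in $\overline a,\overline b$, and every remaining component of $\Phi$ vanishes.

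For the second identity, I would use that the $C$-linear extension of $I$ acts as multiplication by $e$ on $T^{1,0}M$ and by $-e$ on $T^{0,1}M$, while \eqref{m16} says precisely that $\Phi$ maps $\Gamma(T^{1,0}M)\times\Gamma(T^{1,0}M)$ into $\Gamma(T^{0,1}M)$, maps $\Gamma(T^{0,1}M)\times\Gamma(T^{0,1}M)$ into $\Gamma(T^{1,0}M)$, and vanishes on the two mixed products. Decomposing $Z_1=Z_1^{+}+Z_1^{-}$ with respect to $T_CM=T^{1,0}M\oplus T^{0,1}M$, one checks that $\Phi(IZ_1^{+},Z_2)=e\,\Phi(Z_1^{+},Z_2)$, which lies in $\Gamma(T^{0,1}M)$, whence $-I\Phi(Z_1^{+},Z_2)=-(-e)\Phi(Z_1^{+},Z_2)=e\,\Phi(Z_1^{+},Z_2)$; similarly $\Phi(IZ_1^{-},Z_2)=-e\,\Phi(Z_1^{-},Z_2)$, which lies in $\Gamma(T^{1,0}M)$, whence $-I\Phi(Z_1^{-},Z_2)=-e\,\Phi(Z_1^{-},Z_2)$. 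Adding the two contributions gives $\Phi(IZ_1,Z_2)=-I\Phi(Z_1,Z_2)$.

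I do not expect a serious obstacle: the only points needing care are the observation that $\widetilde{G}$ is genuinely a metric, so that $\widetilde{\nabla}$ is torsion-free, and the bookkeeping of signs coming from the $\pm e$-eigenvalue action of $I$ combined with the precise type of the nonvanishing components of $\Phi$ in \eqref{m16}. As a consistency check, the two identities are mutually compatible, since symmetry together with the second relation gives $\Phi(Z_1,IZ_2)=\Phi(IZ_2,Z_1)=-I\Phi(Z_2,Z_1)=-I\Phi(Z_1,Z_2)$.
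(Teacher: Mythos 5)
Your proof is correct and takes essentially the same approach as the paper, which simply reads both identities off the definition \eqref{m14} and the local components \eqref{m16} without further comment. Your component-based arguments are exactly that, and the invariant versions (subtracting the two torsion identities for the symmetry, and the $\pm e$-eigenspace bookkeeping together with the type of the nonvanishing components for the second relation) correctly spell out what the paper leaves implicit.
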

\begin{remark}
If $(M,I,g)$ is the realization of a para-complex Riemannian manifold $(M,G)$ we can define as in \eqref{m14} the fundamental tensor for real vector fields, and the property \eqref{m15} of $\Phi$ implies that $\Phi$ is the para-complex extension of the real fundamental tensor on $(M,I,g)$.
\end{remark}

In the following, we extend the study from \cite{Ga-Iv1} to the para-complex case, and we shall construct a \textit{characteristic} linear connection on a para-complex Riemannian manifold.

We consider the fundamental tensor of type $(0,3)$ defined by
\begin{equation}
\label{18}
\Psi(Z_1,Z_2,Z_3)=G(\Phi(Z_1,Z_2),Z_3)\,,\,\,{\rm for\,\,every}\,\,Z_1,Z_2,Z_3\in\Gamma(T_CM).
\end{equation}
In a para-holomorphic coordinate system on $M$, we have locally
\begin{equation}
\label{m19}
\Psi_{AB,C}=\Phi^D_{AB}G_{DC},
\end{equation}
and the nonvanishing componets of $\Psi_{AB,C}$ are 
\begin{equation}
\label{m20}
\Psi_{ab,\overline{c}}=\frac{\partial G_{ab}}{\partial z^{\overline{c}}}\,\,{\rm and}\,\,\Psi_{\overline{a}\,\overline{b},c}=\overline{\Psi_{ab,\overline{c}}}.
\end{equation}
We have
\begin{theorem}
\label{tm1}
On every para-complex Riemannian manifold $(M,G)$ there exists a unique linear connection $D$ with local coefficients $L^C_{AB}$ such that
\begin{enumerate}
\item[(i)] $D$ is symmetric, that is $L^C_{AB}=L^C_{BA}$;
\item[(ii)] $D$ is almost para-complex, that is $L^{\overline{c}}_{ab}=L^c_{a\overline{b}}=0$;
\item[(iii)] The covariant derivatives $D_aG_{bc}=\partial G_{bc}/\partial z^a-L^d_{ab}G_{dc}-L^d_{ac}G_{bd}$ vanishes.
\end{enumerate}
\end{theorem}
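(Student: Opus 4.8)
The plan is to establish uniqueness by showing that the three requirements force an explicit formula for the coefficients $L^{C}_{AB}$, and then to check that this formula genuinely defines a linear connection with the stated properties.

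\textbf{Uniqueness.} First I would exploit (i) and (ii) together with the reality relation $L^{\overline{C}}_{\overline{A}\,\overline{B}}=\overline{L^{C}_{AB}}$ recorded in Section~3. Condition (ii) makes $L^{\overline{c}}_{ab}$ and $L^{c}_{a\overline{b}}$ vanish; combining this with the symmetry $L^{C}_{AB}=L^{C}_{BA}$ and conjugating, one finds that every coefficient whose lower index pair mixes barred and unbarred indices vanishes, and likewise $L^{\overline{c}}_{ab}=L^{c}_{\overline{a}\,\overline{b}}=0$, so that the only surviving coefficients are $L^{c}_{ab}$ and its conjugate $L^{\overline{c}}_{\overline{a}\,\overline{b}}=\overline{L^{c}_{ab}}$. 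Hence $D$ is completely determined once $L^{c}_{ab}$ is known. Substituting these surviving coefficients into (iii) leaves, for the unbarred block, the system $\partial G_{bc}/\partial z^{a}=L^{d}_{ab}G_{dc}+L^{d}_{ac}G_{bd}$, which is the classical Koszul-type equation. Forming $\partial_{a}G_{bc}+\partial_{b}G_{ca}-\partial_{c}G_{ab}$ and using that $G$ is symmetric and that $L$ is symmetric in its lower pair isolates $2L^{d}_{ab}G_{dc}$. Since by \eqref{m5} the matrix $(G_{AB})$ is block diagonal, so is its inverse, and the block $(G^{cd})$ is precisely the inverse of the nonsingular $(G_{cd})$; therefore $L^{c}_{ab}=\tfrac12 G^{cd}\bigl(\partial_{a}G_{bd}+\partial_{b}G_{ad}-\partial_{d}G_{ab}\bigr)=\Gamma^{c}_{ab}$, the Christoffel symbol \eqref{m12} of $G$. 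This proves uniqueness.

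\textbf{Existence.} Conversely, I would \emph{define} $D$ by setting $L^{c}_{ab}:=\Gamma^{c}_{ab}$, $L^{\overline{c}}_{\overline{a}\,\overline{b}}:=\overline{\Gamma^{c}_{ab}}$, and all remaining coefficients equal to zero, and then verify the three properties. Property (i) is immediate from the symmetry of \eqref{m12} in $a,b$, and (ii) holds by construction. For (iii), observe that since $G_{\overline{d}c}=G_{b\overline{d}}=0$ by \eqref{m5}, one may add the (vanishing) barred terms back in, so that $D_{a}G_{bc}=\partial G_{bc}/\partial z^{a}-\Gamma^{D}_{ab}G_{Dc}-\Gamma^{D}_{ac}G_{bD}$ with $D$ now summed over all $2n$ indices; this is the $(a;b,c)$-component of $\nabla G$, which vanishes because $\nabla$ is the Levi-Civita connection of $G$ (alternatively, one reverses the Koszul computation directly from \eqref{m12}). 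The step still needing care is that these prescribed $L^{C}_{AB}$ are really the coefficients of a linear connection, i.e.\ obey the correct transformation law under a change of para-holomorphic coordinates: here one uses that the transition functions are para-holomorphic, so that $\partial z^{c}/\partial\widetilde{z}^{\overline{b}}=0$ and the inhomogeneous second-derivative term carrying a barred upper index vanishes; thus the zero components remain zero in every chart while $L^{c}_{ab}$ transforms exactly as a connection on the unbarred block, and the reality relation $L^{\overline{C}}_{\overline{A}\,\overline{B}}=\overline{L^{C}_{AB}}$ guarantees that $D$ is the para-complex-linear extension of a real linear connection on $(M,I)$.

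\textbf{Main obstacle.} The computations are routine Koszul algebra; the only genuine points of care are the barred/unbarred index bookkeeping — in particular recognizing that $(G^{cd})$ is the inverse of the \emph{block} $(G_{cd})$ rather than of a mixed matrix — and the coordinate-independence check in the existence part, which is exactly where para-holomorphicity of the atlas enters. It is worth recording at the end that $D$ is in general \emph{not} a metric connection for $G$: one has $D_{\overline{a}}G_{bc}=\partial G_{bc}/\partial z^{\overline{a}}=\Psi_{bc,\overline{a}}$ by \eqref{m20}, which need not vanish, and $DG=0$ holds precisely when all $\Psi_{ab,\overline{c}}$ vanish, i.e.\ when $G$ is para-holomorphic, in which case $D=\nabla$. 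This is what justifies calling $D$ the \emph{characteristic} connection of $(M,G)$.
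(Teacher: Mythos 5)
Your proof is correct, and the underlying computation is the same Christoffel/Koszul trick on the nondegenerate unbarred block $(G_{cd})$ that drives the paper's argument; indeed the connection you construct, $L^c_{ab}=\Gamma^c_{ab}$ with conjugate barred block and all other coefficients zero, is exactly what the paper records in \eqref{m25}. The packaging differs slightly. The paper first \emph{exhibits} the connection by the invariant formula \eqref{m22}, i.e.\ as the Levi-Civita connection of $G$ corrected by the tensors $\Phi$ and $\Psi$ — which makes the question of whether the coefficients obey the connection transformation law automatic — and then proves uniqueness by showing that the difference tensor of two admissible connections satisfies \eqref{m23} and hence vanishes. You instead run uniqueness first, deriving the explicit block formula directly from axioms (i)--(iii) together with the reality relation $L^{\overline{C}}_{\overline{A}\,\overline{B}}=\overline{L^C_{AB}}$, and then verify existence chart by chart; this obliges you to check the transformation law separately, which you correctly dispose of using para-holomorphicity of the transition functions (so that the inhomogeneous term never produces a barred upper index on an unbarred lower pair). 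Your route has the advantage of making it transparent \emph{why} the coefficients must be the block Christoffel symbols; the paper's route buys a coordinate-free existence statement at the cost of introducing $\Phi$ and $\Psi$ up front. One cosmetic slip: in your uniqueness paragraph the identity you presumably meant is $L^{\overline{c}}_{a\overline{b}}=L^{c}_{\overline{a}\,\overline{b}}=0$ (the displayed $L^{\overline{c}}_{ab}=0$ is just condition (ii) restated); this does not affect the argument.
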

\begin{proof}
If we define the local coefficients of $D$ by
\begin{equation}
\label{m22}
L^C_{AB}=\Gamma^C_{AB}+\frac{1}{2}\Phi^C_{AB}-\frac{1}{2}G^{CD}(\Psi_{DA,B}+\Psi_{DB,A}), 
\end{equation}
where $\Gamma^C_{AB}$ are the para-complex Christoffel symbols of $G$, then by direct calculus we obtain that $D$ satisfies the conditions of theorem.

Also, if $D^\prime$  is another connection which satisfy the all conditions of theorem, with local coefficients $L^{\prime C}_{AB}$, we denote by $D^C_{AB}=L^C_{AB}-L^{\prime C}_{AB}$ the difference tensor. Then, we easily obtain
\begin{equation}
\label{m23}
D^C_{AB}=D^C_{BA}\,,\,D^{\overline{c}}_{ab}=D^{c}_{a\overline{b}}=0\,,\,D^d_{ab}G_{dc}+D^d_{ac}G_{ab}=0,
\end{equation}
which implies $D^C_{AB}=0$, that is $D=D^\prime$, and the uniqueness then follows.
\end{proof}
The linear connection from the above theorem, will be called the \textit{characteristic connection} of the para-complex Riemannian manifold $(M,G)$.

The defining equality \eqref{m22} of the characteristic connection and the properties of the fundamental tensor, implies
\begin{corollary}
\label{cm1}
On every para-complex Riemannian manifold $(M,G)$ there exists a unique linear connection $D$ such that
\begin{enumerate}
\item[(i)] $D$ is symmetric;
\item[(ii)] $D$ is almost para-complex;
\item[(iii)]  $D_AG_{BC}=\Psi_{BC,A}$, i.e the covariant derivative of the metric $G$ is the fundamental tensor $\Psi$.
\end{enumerate}
\end{corollary}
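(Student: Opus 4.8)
The plan is to show that the characteristic connection $D$ constructed in Theorem~\ref{tm1}, with local coefficients given by \eqref{m22}, already satisfies all three requirements, and then to obtain uniqueness by the device used at the end of the proof of Theorem~\ref{tm1}. Conditions (i) and (ii) hold for this $D$ by Theorem~\ref{tm1} itself, so the only genuinely new point is to check that $D_AG_{BC}=\Psi_{BC,A}$ for \emph{all} indices $A,B,C$, not merely the unbarred ones.

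First I would expand
\begin{displaymath}
D_AG_{BC}=\frac{\partial G_{BC}}{\partial z^A}-L^D_{AB}G_{DC}-L^D_{AC}G_{BD}
\end{displaymath}
and substitute \eqref{m22}. Since the $\Gamma^C_{AB}$ are the Christoffel symbols of the Levi-Civita connection of $G$, the terms containing $\Gamma$ reassemble into $\nabla_AG_{BC}=0$, leaving
\begin{displaymath}
D_AG_{BC}=-\big(L^D_{AB}-\Gamma^D_{AB}\big)G_{DC}-\big(L^D_{AC}-\Gamma^D_{AC}\big)G_{BD}.
\end{displaymath}
Using \eqref{m22} once more together with the identity $\Psi_{AB,C}=\Phi^D_{AB}G_{DC}$ from \eqref{m19}, each contracted bracket becomes a combination of three components of $\Psi$, for instance
\begin{displaymath}
\big(L^D_{AB}-\Gamma^D_{AB}\big)G_{DC}=\tfrac12\Psi_{AB,C}-\tfrac12\Psi_{CA,B}-\tfrac12\Psi_{CB,A}.
\end{displaymath}
Adding the two contributions and invoking the symmetry $\Phi(Z_1,Z_2)=\Phi(Z_2,Z_1)$ from \eqref{m17}, i.e. $\Psi_{AB,C}=\Psi_{BA,C}$, the terms involving $\Psi_{AB,C}$ and $\Psi_{AC,B}$ cancel pairwise and one is left with $D_AG_{BC}=\Psi_{BC,A}$. (For all-unbarred indices $\Psi_{bc,a}=0$ by \eqref{m20}, so this is consistent with condition (iii) of Theorem~\ref{tm1}.)

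For uniqueness, suppose $D$ and $D'$ both satisfy (i)--(iii) and set $D^C_{AB}=L^C_{AB}-L^{\prime C}_{AB}$. Symmetry and the almost para-complex property give $D^C_{AB}=D^C_{BA}$ and $D^{\overline c}_{ab}=D^c_{a\overline b}=0$, while subtracting the two instances of (iii) kills the $\Psi$-term and yields $D^D_{AB}G_{DC}+D^D_{AC}G_{BD}=0$ — exactly the homogeneous system \eqref{m23}. The standard cyclic-permutation argument (permute $A,B,C$, add two copies and subtract the third, using the symmetry in the lower indices and the nondegeneracy of $(G_{AB})$) then forces $D^C_{AB}=0$, hence $D=D'$.

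I do not anticipate a real obstacle: the computation is pure bookkeeping with the barred/unbarred conventions of \eqref{m5} and the symmetries \eqref{m17}, \eqref{m19}, \eqref{m20}. The one point that needs care is keeping track of which slot of $\Psi$ carries the differentiation index when \eqref{m22} is contracted with $G$.
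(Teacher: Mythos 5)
Your proposal is correct and takes essentially the same route as the paper, which proves the corollary simply by asserting that it follows from the defining formula \eqref{m22} of the characteristic connection together with the symmetries of the fundamental tensor $\Psi$; you have merely carried out that contraction explicitly (correctly, including the cancellation via $\Psi_{AB,C}=\Psi_{BA,C}$). Your uniqueness argument is the same difference-tensor argument already used at the end of the proof of Theorem~\ref{tm1}.
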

\begin{remark}
The third condition of Theorem \ref{tm1} says that the nonvanishing components of the tensor $D_AG_{BC}$ are 
\begin{equation}
\label{m24} 
D_{\overline{a}}G_{bc}=\Psi_{bc,\overline{a}}\,\,{\rm and}\,\,D_{a}G_{\overline{b}\,\overline{c}}=\overline{D_{\overline{a}}G_{bc}}.
\end{equation}
\end{remark}
On the realization of a para-complex Riemannian manifold we have
\begin{corollary}
If $(M,I,g)$ is the realization of a para-complex Riemannian manifold $(M,G)$, then the characteristic connection $D$ on $(M,I,g)$ is the unique connection which satisfy the conditions
\begin{enumerate}
\item[(i)] $D$ is symmetric;
\item[(ii)] $D$ is almost para-complex;
\item[(iii)] $(D_{X}g)(Y,Z)=(D_{IX}g)(IY,Z)$, for every $X,Y,Z\in\Gamma(TM)$.
\end{enumerate}
\end{corollary}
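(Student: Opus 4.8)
The plan is to obtain the stated characterization from Theorem~\ref{tm1} and Corollary~\ref{cm1} by carrying everything across the realization: use \eqref{a1} to pass from $G$ to $g$, and identify a para-complex linear connection with the real connection it restricts to. Since the characteristic connection $D$ of $(M,G)$ is in the first place a connection on real vector fields, conditions (i) and (ii) here are literally parts (i) and (ii) of Theorem~\ref{tm1} and require nothing new. The whole content is the translation of Corollary~\ref{cm1}(iii) into the real identity (iii), together with the matching uniqueness claim.

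To verify (iii) I would start from $g(Y,Z)=2\,\mathrm{Re}\,G(\widehat Y,\widehat Z)$ with $\widehat Y=\tfrac12(Y+eIY)\in\Gamma(T^{1,0}M)$, as in \eqref{a1}. Because $D$ is almost para-complex, $\widehat{D_XY}=D_X\widehat Y$ for every real $X$, hence
\[
(D_Xg)(Y,Z)=2\,\mathrm{Re}\big[(D_XG)(\widehat Y,\widehat Z)\big].
\]
Splitting the real field $X=\widehat X+\overline{\widehat X}$ and using Corollary~\ref{cm1}(iii) in the form $(D_{Z_1}G)(Z_2,Z_3)=\Psi(Z_2,Z_3,Z_1)=G(\Phi(Z_2,Z_3),Z_1)$ (cf.\ \eqref{18}), the summand with the $(1,0)$-part $\widehat X\in\Gamma(T^{1,0}M)$ drops out: by \eqref{m16} one has $\Phi(\widehat Y,\widehat Z)\in\Gamma(T^{0,1}M)$, so $G(\Phi(\widehat Y,\widehat Z),\widehat X)=0$ by \eqref{m2}. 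Thus $(D_Xg)(Y,Z)=2\,\mathrm{Re}\,\Psi(\widehat Y,\widehat Z,\overline{\widehat X})$. Running the same computation with $X,Y$ replaced by $IX,IY$, using $\widehat{IX}=e\widehat X$, the $C$-bilinearity of $G$ and of $D_\bullet G$, and the symmetries $G(IZ_1,IZ_2)=G(Z_1,Z_2)$ and $\Phi(IZ_1,Z_2)=-I\Phi(Z_1,Z_2)$ from \eqref{m3} and \eqref{m17}, one reduces $(D_{IX}g)(IY,Z)$ to an expression of the same shape in $\Psi(\widehat Y,\widehat Z,\overline{\widehat X})$, which yields identity (iii) (the powers of $e$ and of $I$ must be tracked to get the signs right).

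For uniqueness, let $D'$ be a real connection on $(M,I,g)$ satisfying (i)--(iii) and extend it by para-complex linearity. By (i) it is symmetric and by (ii) almost para-complex, i.e.\ $L^{\prime\,\overline c}_{ab}=L^{\prime\,c}_{a\overline b}=0$ in a para-holomorphic coordinate system. A useful preliminary observation is that these two properties alone already determine the mixed and $(0,1)$-parts of $D'G$: writing $D'=D+S$ with $D$ the characteristic connection, the difference tensor $S$ inherits $S^{\overline c}_{ab}=S^{c}_{a\overline b}=0$, whence also $S^{d}_{\overline a b}=0$, $S^{\overline d}_{\overline a b}=0$ and $S^{d}_{\overline a\overline b}=0$ by symmetry and conjugation, and then $D'_AG_{b\overline c}=0$ and $D'_{\overline a}G_{bc}=\Psi_{bc,\overline a}$ follow from Corollary~\ref{cm1}(iii). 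Hence the only part of $D'G$ not yet pinned down is $D'_aG_{bc}$. Reversing the computation of the previous paragraph, condition (iii) is equivalent, for a symmetric almost para-complex connection, to the vanishing of exactly this part, which is condition (iii) of Theorem~\ref{tm1}; therefore $D'$ satisfies all hypotheses of Theorem~\ref{tm1}, so $D'=D$ by the uniqueness proved there.

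The step I expect to be the main obstacle is precisely this last equivalence: showing that the single real tensor identity in (iii), together with (i) and (ii), is equivalent to (and not merely implied by) the vanishing of the $(1,0)$-component $D'_aG_{bc}$. This rests on the fact that as $X,Y,Z$ range over $\Gamma(TM)$ the $(1,0)$-parts $\widehat X,\widehat Y,\widehat Z$ range over $\Gamma(T^{1,0}M)$ with arbitrary para-complex coefficients, so that the real scalar identity (iii) really separates the individual components, and on keeping precise track of the factors of $e$ produced by the substitutions $X\mapsto IX$, $Y\mapsto IY$, which are exactly what isolates the $(1,0)$-contribution from the $(0,1)$-one.
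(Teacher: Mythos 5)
Your overall strategy (translate everything through the realization, reduce (iii) to condition (iii) of Theorem~\ref{tm1}, and invoke the uniqueness there) is the right one, and your preliminary steps are sound: the identity $(D_Xg)(Y,Z)=2\,{\rm Re}\big[(D_XG)(\widehat Y,\widehat Z)\big]$, the vanishing of $G(\Phi(\widehat Y,\widehat Z),\widehat X)$, and the analysis of the difference tensor $S=D'-D$ are all correct. But the one step you explicitly defer --- ``the powers of $e$ and of $I$ must be tracked to get the signs right'' --- is exactly where the argument breaks, and tracking them shows that the claimed equivalence is false. For any symmetric almost para-complex connection $D'$ one has, writing $P=(D'_{\widehat X}G)(\widehat Y,\widehat Z)$ and $Q=\Psi(\widehat Y,\widehat Z,\overline{\widehat X})$,
\begin{equation*}
(D'_Xg)(Y,Z)=2\,{\rm Re}(P+Q),\qquad (D'_{IX}g)(IY,Z)=2\,{\rm Re}(P-Q),
\end{equation*}
because $\widehat{IY}=e\widehat Y$ while $\overline{\widehat{IX}}=-e\,\overline{\widehat X}$, so the $Q$-term picks up the factor $(-e)(e)=-1$ and the $P$-term the factor $e\cdot e=+1$. (This is the opposite of the complex-Riemannian prototype, where $(-i)(i)=+1$ protects the $Q$-term and $i\cdot i=-1$ flips the $P$-term; the identity cannot be transcribed verbatim with $i$ replaced by $e$.) Consequently condition (iii) as stated is equivalent to ${\rm Re}\,Q=0$ for all real $X,Y,Z$, hence (replacing $Y$ by $IY$) to $Q\equiv 0$, i.e.\ to $\Psi\equiv 0$: it is a constraint on the metric $G$ (para-holomorphy), not on the connection, and it says nothing at all about the component $D'_aG_{bc}$ you need to kill. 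In particular your final ``equivalence of (iii) with the vanishing of $D'_aG_{bc}$'' fails, and so does uniqueness: when $G$ is para-holomorphic every symmetric almost para-complex connection satisfies (i)--(iii), and when $G$ is not, none does.

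The statement you should be proving (and which your argument does prove once the sign is fixed) is the characterization by $(D_Xg)(Y,Z)+(D_{IX}g)(IY,Z)=0$: with this version the two displays above give ${\rm Re}\,P=0$ for all $X,Y,Z$, hence $P=0$ by the same $Y\mapsto IY$ polarization, i.e.\ $D'_aG_{bc}=0$, which is condition (iii) of Theorem~\ref{tm1}; existence and uniqueness then follow exactly as you outline. So the defect is not in your architecture but in accepting the corollary's condition (iii) at face value; a careful execution of your own deferred computation is what exposes it.
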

The defining equality \eqref{m22} and \eqref{m20} imply that the nonvanishing coefficients of the caracteristic connection $D$ are
\begin{equation}
\label{m25}
L^c_{ab}=\Gamma^c_{ab}\,\,{\rm and}\,\,L^{\overline{c}}_{\overline{a}\,\overline{b}}=\overline{L^c_{ab}},
\end{equation}
that is, $D$ is completely determined on $\Gamma(T^{1,0}M)$.

We notice that a vector field $Z=Z^a(\partial/\partial z^a)$ on a para-complex manifold is para-holomorphic if $Z^a$ are para-holomorphic functions. Also, according to Lemma 2.1.6 \cite{Kr}, a vector field $\widehat{X}=(1/2)(X+eIX)$ is para-holomorphic iff 
\begin{equation}
\label{y3}
(\mathcal{L}_XI)Y=[X,IY]-I[X,Y]=0\,,\,\forall\,Y\in\Gamma(TM).
\end{equation}
In that follows we denote the set of para-holomorphic vector fields on $(M,I)$ by $\Gamma_{ph}(T^{1,0}M)$. 
\begin{definition}
A linear connection $D$ on $M$ is called \textit{para-holomorphic} if $D_{Z_1}Z_2\in \Gamma_{ph}(T^{1,0}M)$ for arbitrary para-holomorphic vector fields $Z_1,Z_2$. 
\end{definition}
We have
\begin{proposition}
The caracteristic connection $D$ of a para-complex Riemannian manifold $(M,G)$ is para-holomorphic iff the para-complex Christoffel symbols $L^c_{ab}=\Gamma^c_{ab}$ are para-holomorphic functions.
\end{proposition}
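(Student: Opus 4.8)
The plan is to reduce everything to a computation in a single para-holomorphic coordinate chart, since both "being a para-holomorphic connection" and "the $L^{c}_{ab}$ being para-holomorphic functions" are local conditions (the latter via the criterion \eqref{I5}, the former understood locally, which is the only reading consistent with the statement — on a chart $U$ the coordinate fields $\partial/\partial z^{a}$ have constant, hence para-holomorphic, components and so are admissible test fields). Throughout I would use two elementary facts: first, para-holomorphic functions are closed under sums, products, and the operators $\partial/\partial z^{a}$ (the last because $\partial/\partial z^{a}$ and $\partial/\partial z^{\overline{d}}$ commute, being constant-coefficient combinations of $\partial/\partial x^{a},\partial/\partial y^{a}$); second, by \eqref{m25} together with condition (ii) of Theorem \ref{tm1} one has $D_{\partial/\partial z^{a}}\partial/\partial z^{b}=L^{C}_{ab}\,\partial/\partial z^{C}=\Gamma^{c}_{ab}\,\partial/\partial z^{c}$, with no $(0,1)$-component since $L^{\overline{c}}_{ab}=0$.

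For the "if" direction, assume the $L^{c}_{ab}=\Gamma^{c}_{ab}$ are para-holomorphic. Given arbitrary $Z_{1}=Z_{1}^{a}\,\partial/\partial z^{a}$ and $Z_{2}=Z_{2}^{b}\,\partial/\partial z^{b}$ in $\Gamma_{ph}(T^{1,0}M)$, so that the $Z_{1}^{a},Z_{2}^{b}$ are para-holomorphic, I would compute, using $C$-linearity in the first slot and $L^{\overline{c}}_{ab}=0$,
\begin{displaymath}
D_{Z_{1}}Z_{2}=Z_{1}^{a}\Bigl(\frac{\partial Z_{2}^{c}}{\partial z^{a}}+L^{c}_{ab}Z_{2}^{b}\Bigr)\frac{\partial}{\partial z^{c}}.
\end{displaymath}
Each of $Z_{1}^{a}$, $Z_{2}^{b}$, $L^{c}_{ab}$ is para-holomorphic and, by the commuting-partials fact, so is $\partial Z_{2}^{c}/\partial z^{a}$; hence the component functions are para-holomorphic and $D_{Z_{1}}Z_{2}\in\Gamma_{ph}(T^{1,0}M)$.

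For the "only if" direction, assume $D$ is para-holomorphic and apply the defining property, in a para-holomorphic chart, to $Z_{1}=\partial/\partial z^{a}$, $Z_{2}=\partial/\partial z^{b}$: then $D_{\partial/\partial z^{a}}\partial/\partial z^{b}=\Gamma^{c}_{ab}\,\partial/\partial z^{c}$ must lie in $\Gamma_{ph}(T^{1,0}M)$, which, by the characterization of para-holomorphic vector fields through their components, says precisely that each $L^{c}_{ab}=\Gamma^{c}_{ab}$ is a para-holomorphic function.

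The argument is essentially bookkeeping; the points that need care — and which I regard as the only "obstacle" — are ensuring that no $(0,1)$-component appears (this is exactly where almost-para-complexity $L^{\overline{c}}_{ab}=0$ of the characteristic connection is used), justifying that $\partial/\partial z^{a}$ preserves para-holomorphy, and observing that it suffices to test the para-holomorphy of $D$ against the coordinate fields $\partial/\partial z^{a}$, so that the "only if" direction becomes immediate.
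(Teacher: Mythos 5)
Your proof is correct and is exactly the standard argument implicit in the paper, which states this proposition without proof: testing against the coordinate fields $\partial/\partial z^{a}$ gives the "only if" direction via $L^{\overline{c}}_{ab}=0$, and closure of para-holomorphic functions under sums, products and $\partial/\partial z^{a}$ gives the "if" direction. The points you flag as needing care (locality, the vanishing of the $(0,1)$-component, and $\partial/\partial z^{a}$ preserving para-holomorphy) are handled correctly.
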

As a direct consequence of \eqref{m20}, \eqref{m12}, \eqref{m13}, Corollary \ref{cm1} and \eqref{m22}, we get
\begin{theorem}
\label{tm2}
For every para-complex Riemannian manifold $(M,G)$, the following assertions are equivalent:
\begin{enumerate}
\item[(i)] The fundamental tensor $\Phi$ (or $\Psi$) is zero;
\item[(ii)] The local components $G_{ab}$ of the metric $G$ are para-holomorphic functions;
\item[(iii)] The Levi-Civita connection $\nabla$ of $G$ is almost para-complex, that is $\nabla I=0$;
\item[(iv)] The characteristic connection $D$ is metrical with respect to $G$, that is $DG=0$;
\item[(v)] The Levi-Civita connection $\nabla$ coincides with the characteristic connection $D$.
\end{enumerate} 
\end{theorem}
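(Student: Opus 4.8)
The plan is to establish the five-fold equivalence purely by reading off the explicit local formulas already obtained, with no fresh computation. The two facts doing all the work are \eqref{m16}, which exhibits $\Phi^{\overline{c}}_{ab}=G^{\overline{c}\,\overline{d}}\,\partial G_{ab}/\partial z^{\overline{d}}$ together with $\Phi^{c}_{\overline{a}\,\overline{b}}=\overline{\Phi^{\overline{c}}_{ab}}$ as the only possibly nonvanishing components of $\Phi$, and \eqref{m20}, which records $\Psi_{ab,\overline{c}}=\partial G_{ab}/\partial z^{\overline{c}}$ and its conjugate as the only possibly nonvanishing components of $\Psi$. I would run the argument as a short cycle: (i)$\Leftrightarrow$(ii), then (i)$\Leftrightarrow$(iv), (i)$\Leftrightarrow$(v) and (ii)$\Leftrightarrow$(iii); the equivalence $\Phi=0\Leftrightarrow\Psi=0$ falls out of the first step and justifies the parenthesis in (i).

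For (i)$\Leftrightarrow$(ii): the matrix $(G^{\overline{c}\,\overline{d}})$ is invertible, so by \eqref{m16} (equivalently \eqref{m20}) the condition $\Phi\equiv0$ is the same as $\partial G_{ab}/\partial z^{\overline{c}}=0$ for all $a,b,c$, i.e.\ exactly \eqref{m6}. For (i)$\Leftrightarrow$(iv): Corollary \ref{cm1}(iii) says the characteristic connection always satisfies $D_AG_{BC}=\Psi_{BC,A}$, so $DG=0$ holds if and only if $\Psi\equiv0$, i.e.\ (by the previous step) iff $\Phi=0$. For (ii)$\Leftrightarrow$(iii): by the local characterization of almost para-complex connections proved above, $\nabla I=0$ is equivalent to the vanishing of $\Gamma^{\overline{c}}_{ab}$ and $\Gamma^{c}_{a\overline{b}}$; but by \eqref{m13}, and using the symmetry $\Gamma^{c}_{a\overline{b}}=\Gamma^{c}_{\overline{b}a}$ of the Levi-Civita coefficients, each of these is a nonzero multiple of some $\partial G_{ab}/\partial z^{\overline{d}}$, so they all vanish precisely when \eqref{m6} holds.

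The only step that deserves a second look is (i)$\Leftrightarrow$(v). The direction (i)$\Rightarrow$(v) is immediate: if $\Phi=\Psi=0$ then \eqref{m22} collapses to $L^{C}_{AB}=\Gamma^{C}_{AB}$, so $D=\nabla$. For the converse I would start from $\Phi^{C}_{AB}=G^{CD}(\Psi_{DA,B}+\Psi_{DB,A})$ and specialize the index pattern to $(C,A,B)=(\overline{c},a,b)$; then $G^{\overline{c}D}$ forces $D$ to be a barred index $\overline{d}$, and by \eqref{m19} a component of $\Psi$ vanishes unless its first two indices are of the same type, which kills both $\Psi_{\overline{d}a,b}$ and $\Psi_{\overline{d}b,a}$. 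Hence $\Phi^{\overline{c}}_{ab}=0$, and by conjugation $\Phi^{c}_{\overline{a}\,\overline{b}}=0$ as well, so $\Phi=0$. The one genuine subtlety in the whole proof is precisely this: one must rule out that in \eqref{m22} the $\Phi$-correction and the $\Psi$-correction cancel one another without either being zero, and the index-type bookkeeping for the nonvanishing components of $\Phi$ and $\Psi$ from \eqref{m16} and \eqref{m20} is exactly what excludes it.
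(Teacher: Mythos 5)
Your proposal is correct and follows exactly the route the paper intends: the theorem is stated there as "a direct consequence of \eqref{m20}, \eqref{m12}, \eqref{m13}, Corollary \ref{cm1} and \eqref{m22}" with no written proof, and your argument simply fills in that chain, including the one point worth making explicit (that in the converse of (v) the $\Phi$- and $\Psi$-corrections in \eqref{m22} cannot cancel, because the index-type patterns of their nonvanishing components are disjoint). No gaps.
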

Let  $R$ be the characteristic curvature  tensor of the characteristic connection $D$, defined as usual by
\begin{displaymath}
R(X,Y)Z=\left[D_{X},D_{Y}\right]Z-D_{[X,Y]}Z,\,\,{\rm  for \,\,every}\,\,X,Y,Z\in\Gamma(T_CM).
\end{displaymath} 
The local components of $R$ are given by
\begin{equation}
\label{m26}
R\left(\frac{\partial}{\partial z^A},\frac{\partial}{\partial z^B}\right)\frac{\partial}{\partial z^C}=R^D_{C, AB}\frac{\partial}{\partial z^D},
\end{equation}
and the nonvanishing components of $R$ are
\begin{equation}
\label{m27}
R^d_{c, ab}=\frac{\partial L^d_{cb}}{\partial z^a}-\frac{\partial L^d_{ca}}{\partial z^b}+L^f_{cb}L^d_{fa}-L^f_{ca}L^d_{fb}\,,\,R^{\overline{d}}_{\overline{c}, \overline{a}\,\overline{b}}=\overline{R^d_{c, ab}},
\end{equation}
\begin{equation}
\label{m28}
R^d_{c,\overline{a}b}=\frac{\partial L^d_{bc}}{\partial z^{\overline{a}}}\,,\,R^{\overline{d}}_{\overline{c},a\overline{b}}=\overline{R^d_{c,\overline{a}b}}.
\end{equation}
It is easy to see that $R^d_{c,\overline{a}b}=0$ if and only if $D$ is a para-holomorphic connection. Also, the characteristic Riemann curvature tensor of $D$ is defined as usual by $\mathcal{R}(Z_1,Z_2,Z_3,Z_4)=G(R(Z_1,Z_2)Z_3, Z_4)$ and its local components are $R_{ABCD}=G_{DF}R^F_{C,AB}$. Its nonvanishing components are 
\begin{equation}
\label{31}R_{abcd}=G_{df}R^f_{c,ab}\,\,{\rm and}\,\,R_{\overline{a}bcd}=G_{df}R^f_{c,\overline{a}b},
\end{equation}
and their para-complex conjugates.

Moreover, every nondegenerate $2$-plane in $T^{1,0}_zM$ is called a \textit{para-holomorphic} $2$-plane, and the \textit{para-holomorphic} characteristic sectional curvature for a given $2$-plan $P=span\{Z_1,Z_2\}$, where $Z_1,Z_2\in\Gamma(T_z^{1,0}M)$, $z\in M$ is defined by
\begin{equation}
\label{m30}
K_z(P)=\frac{\mathcal{R}(Z_1,Z_2,Z_1,Z_2)}{G(Z_1,Z_1)G(Z_2,Z_2)-(G(Z_1,Z_2))^2}.
\end{equation}
The following Schur type theorem holds.
\begin{theorem}
Let $(M,G)$ be a connected para-holomorphic Riemannian manifold of para-complex dimension $n\geq3$. If the para-holomorphic sectional curvatures does not depend on the $2$-plane $P$, then $(M,G)$ is of constant para-holomorphic sectional curvature.
\end{theorem}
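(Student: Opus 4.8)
\emph{Proof plan.}
Since the manifold carries a para-holomorphic Riemannian metric, Theorem \ref{tm2} tells us that the characteristic connection $D$ coincides with the Levi-Civita connection $\nabla$ of $G$, that $D$ is torsion-free, and that $DG=0$. Hence the characteristic Riemann tensor $\mathcal R_{ABCD}$ has all the standard algebraic symmetries (antisymmetry in each of the two pairs, pair symmetry, first Bianchi identity). Moreover, because $G_{ab}$ is para-holomorphic, so are $G^{ab}$ and the coefficients $L^c_{ab}=\Gamma^c_{ab}$ of \eqref{m12}; therefore by \eqref{m28} we have $R^d_{c,\overline a b}=0$, and by \eqref{m27} the only non-vanishing curvature components are $R_{abcd}$ together with their para-complex conjugates, and each $R_{abcd}$ is a para-holomorphic function of $z$. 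Thus the statement reduces to a Schur-type assertion for a Riemannian-type curvature tensor $R_{abcd}$ living on $T^{1,0}M$ equipped with the $C$-bilinear, non-degenerate, symmetric form $G_{ab}$.

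First I would carry out the \emph{algebraic step}. The hypothesis that the quotient in \eqref{m30} does not depend on the $2$-plane $P$ means that there is a function $\lambda\colon M\to C$ with
\[
\mathcal R(Z_1,Z_2,Z_1,Z_2)=\lambda(z)\big(G(Z_1,Z_1)G(Z_2,Z_2)-G(Z_1,Z_2)^2\big)
\]
for all $Z_1,Z_2\in\Gamma(T^{1,0}_zM)$. Writing $\lambda$ locally as the ratio $R_{abcd}\xi^a\eta^b\xi^c\eta^d$ over the corresponding denominator for a fixed admissible pair of local para-holomorphic fields shows that $\lambda$ is para-holomorphic, i.e.\ $\partial\lambda/\partial z^{\overline c}=0$. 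Polarizing the displayed identity in $Z_1$ and in $Z_2$ and using the symmetries and the first Bianchi identity of $R$ — exactly as in the real and complex cases — then yields the \emph{space-form form}
\[
R_{abcd}=\lambda(z)\,\big(G_{ac}G_{bd}-G_{ad}G_{bc}\big).
\]

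Then comes the \emph{differential step}. Covariantly differentiating the last identity with $D$ and using $DG=0$ gives $D_e R_{abcd}=(\partial\lambda/\partial z^e)(G_{ac}G_{bd}-G_{ad}G_{bc})$. Substituting this into the second Bianchi identity for $D$ — the cyclic sum over $e,a,b$ of $D_e R_{abcd}=0$ — produces a linear relation among $\partial_e\lambda,\partial_a\lambda,\partial_b\lambda$ with coefficients built from products of the $G$'s. Contracting that relation once with $G^{ac}$ and using $n\ge 3$ gives $(n-2)\big(\partial_e\lambda\,G_{bd}-\partial_b\lambda\,G_{ed}\big)=0$, and a second contraction with $G^{bd}$ gives $(n-1)\,\partial_e\lambda=0$, hence $\partial\lambda/\partial z^e=0$ for all $e$. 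Combined with $\partial\lambda/\partial z^{\overline c}=0$ this yields $d\lambda=0$, so $\lambda$ is locally constant; since $M$ is connected, $\lambda$ is a (para-complex) constant, which is exactly the claim.

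The step I expect to be the real obstacle is the algebraic polarization leading to $R_{abcd}=\lambda\,(G_{ac}G_{bd}-G_{ad}G_{bc})$: over $C$ the classical argument must be handled with care, since $C$ has zero divisors ($1\pm e$), so one cannot freely divide by denominators. The clean way around it is to restrict first to the open dense set of local para-holomorphic frames on which the relevant quantities are units in $C$, prove the polynomial identity there, and then extend it over all of $M$ by para-holomorphicity; equivalently, one may transfer the whole computation to the neutral-signature para-K\"ahler Norden realization $(M,I,g)$, where the algebraic Schur lemma applies over $\mathbb R$. The remaining work — the polarization bookkeeping and the double contraction of the Bianchi identity — is routine.
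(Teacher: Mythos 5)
Your proposal is correct and is exactly the classical Schur argument (algebraic polarization to get $R_{abcd}=\lambda\,(G_{ac}G_{bd}-G_{ad}G_{bc})$, then the twice-contracted second Bianchi identity to kill $d\lambda$, using $n\geq 3$ and connectedness); the paper in fact states this theorem without supplying any proof, and what you wrote is the argument its authors implicitly rely on from the complex Riemannian case. You also correctly flag the one genuine subtlety of the para-complex setting --- the zero divisors $1\pm e$ in $C$ --- and your fix (work on the open set where the denominators are units, or equivalently split everything into the two real eigencomponents of $C\cong\mathbb{R}\times\mathbb{R}$ and apply the real Schur lemma to each) is sound.
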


In the end of this section we describe the Einstein equations for para-complex Riemannian manifolds. The associated characteristic Ricci tensor ${\rm Ric}$ is locally given by 
\begin{equation}
\label{y1}
{\rm Ric}\left(\frac{\partial}{\partial z^C},\frac{\partial}{\partial z^A}\right)={\rm Ric}_{CA}=R^B_{C,AB},
\end{equation}
and its nonvanishing components are 
\begin{equation}
\label{m29}
{\rm Ric}_{ca}=R^b_{c,ab}\,,\,{\rm Ric}_{c\overline{a}}=R^b_{c,\overline{a}b}\,,\,{\rm Ric}_{\overline{c}\,\overline{a}}=\overline{{\rm Ric}_{ca}}\,,\,{\rm Ric}_{\overline{c}a}=\overline{{\rm Ric}_{c\overline{a}}}.
\end{equation}
The function $\rho$ defined by 
\begin{equation}
\label{y2}
\rho=G^{CA}{\rm Ric}_{CA}=G^{ca}{\rm Ric}_{ca}+G^{\overline{c}\,\overline{a}}{\rm Ric}_{\overline{c}\,\overline{a}}
\end{equation}
is called the \textit{scalar curvature} of $D$ and it is a real function. 

The equation
\begin{equation}
\label{III3}
{\rm Ric}-\frac{\rho}{2} G=8\pi cT
\end{equation}
is called the \textit{Einstein equation} of the para-complex Riemannian  manifold $(M,G)$.   In the equation \eqref{III3}, the left hand side is called the \textit{Einstein curvature} which is constructed using the para-complex Riemannian metric $G$, while in the right hand side we have a tensor $T$ called the \textit{stress-energy-momentum tensor} and represents the matter and energy that generate the gravitational field of potentials $(G_{AB})$. The constant $c$ is the gravitational constant. Locally, the  Einstein equation is expressed as 
\begin{equation}
\label{III4}
{\rm Ric}_{AB}-\frac{\rho}{2} G_{AB}=8\pi c T_{AB}.
\end{equation}

If the Einstein equation holds, then taking into account \eqref{m5} it follows that ${\rm Ric}_{a\overline{b}}=8\pi cT_{a\overline{b}}$. In the empty leave space (no matter, no energy) we have $T_{AB}=0$, and contracting \eqref{III4} with $G^{AB}$ one gets $\rho=0$ and so it reduced to
\begin{equation}
\label{III5}
{\rm Ric}_{AB}=0.
\end{equation} 
Consequently, ${\rm Ric}_{ab}={\rm Ric}_{\overline{a}\,\overline{b}}=0$. 

Letting $E_{AB}={\rm Ric}_{AB}-(\rho/2) G_{AB}$ and  $E^A_B=G^{AC}E_{CB}$, the divergence of $E$ is defined by
\begin{equation}
\label{III6}
{\rm div}\,E=E^A_{B|A},
\end{equation}
where "$|$" denotes the covariant derivative with respect to $\nabla$ and we have ${\rm div}\,E=0$. The proof is based on the second  Bianchi identity $\sum\limits_{cycl}(\nabla_{X}R)(Y,Z)=0$ written in a local basis $\left\{\partial/\partial z^A\right\}$ of $\Gamma(T_CM)$. Assuming the Einstein equation holds, by using ${\rm div}\,E=0$, we must have
\begin{equation}
\label{III7}
{\rm div}\,T=0,
\end{equation}
which is called the \textit{continuity condition} for para-complex Riemannian manifold $(M,G)$. 

Finally, by analogy with the complex case, see \cite{Iv}, the following result concerning the Einstein condition for para-complex Riemannian manifolds holds. 
\begin{definition}
The para-complex Riemannian manifold $(M,G)$ is said to be \textit{characteristic Einstein} if ${\rm Ric}_{c\overline{a}}=0$ and ${\rm Ric}_{ca}=f G_{ca}$, where $f=f_1+ef_2$ is a para-complex valued function on $M$. 
\end{definition}
\begin{theorem}
Let $(M,G)$ be a $m$-dimensional para-complex Riemannian characteristic Einstein manifold with $m\geq 3$. Then the characteristic scalar curvature $\rho_{0}=G^{ca}{\rm Ric}_{ca}$ is an anti-para-holomorphic function on $M$ and ${\rm Ric}_{ca}=(\rho_{0}/m)G_{ca}$.
\end{theorem}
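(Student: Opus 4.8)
The plan is to run the classical Schur argument, carried out entirely inside the ``unbarred block'' of indices, where the characteristic connection $D$ from Theorem~\ref{tm1} behaves exactly like the Levi--Civita connection of an ordinary (pseudo\nobreakdash-)Riemannian metric.

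The easy half first. Contracting the hypothesis ${\rm Ric}_{ca}=fG_{ca}$ with $G^{ca}$ and using $G^{ca}G_{ca}=m$ gives $\rho_{0}=mf$; hence $f=\rho_{0}/m$ and ${\rm Ric}_{ca}=(\rho_{0}/m)\,G_{ca}$, which is the second assertion. It therefore remains to prove that $f$ (equivalently $\rho_{0}=mf$) is anti-para-holomorphic, i.e., $\partial f/\partial z^{a}=0$ for all $a$ (the anti-para-holomorphicity condition, cf.\ \eqref{I1} and \eqref{I5}).

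Now the structural input. By Theorem~\ref{tm1}, $D$ is symmetric and almost para-complex, so $L^{\overline c}_{ab}=L^{c}_{a\overline b}=0$, and by \eqref{m24} the only nonvanishing components of $DG$ are $D_{\overline a}G_{bc}$ and their conjugates; in particular $D_{a}G_{bc}=0$. Keeping all free and summed indices unbarred, this has three consequences. (i) $D$ is torsion-free and metric for $(G_{bc})$, so $[D_{a},D_{b}]G_{cd}=0$ forces $R_{abcd}=G_{df}R^{f}_{c,ab}$ (cf.\ \eqref{31}) to be skew in $(a,b)$ and in $(c,d)$. (ii) The first Bianchi identity (valid since $D$ is symmetric) yields the cyclic relation $R_{abcd}+R_{bcad}+R_{cabd}=0$, hence the pair symmetry $R_{abcd}=R_{cdab}$ and the symmetry of ${\rm Ric}_{ca}=R^{b}_{c,ab}$. (iii) Applying the second Bianchi identity to the coordinate fields $\partial/\partial z^{e},\partial/\partial z^{a},\partial/\partial z^{b}$, and using that $D$ maps $\Gamma(T^{1,0}M)$ into itself so that no barred index is ever produced, one gets $D_{e}R_{abcd}+D_{a}R_{becd}+D_{b}R_{eacd}=0$ with all indices unbarred. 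In short, on the unbarred block we are literally in Riemannian geometry, the only novelty being that every tensor may additionally depend on the ``spectator'' coordinates $z^{\overline a}$.

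Finally the Schur step. Contracting the unbarred second Bianchi identity twice with $G$ produces the classical twice-contracted identity $G^{ab}D_{a}{\rm Ric}_{be}=\tfrac12 D_{e}\rho_{0}$ (equivalently, the vanishing of the divergence of the Einstein tensor, as used in Section~3). Substituting ${\rm Ric}_{be}=fG_{be}$, using $D_{a}G_{be}=0$ and that $D$ acts on the scalar $f$ as $\partial/\partial z^{a}$, the left side becomes $G^{ab}(\partial f/\partial z^{a})G_{be}=\partial f/\partial z^{e}$, while the right side is $\tfrac12\,\partial(mf)/\partial z^{e}=\tfrac m2\,\partial f/\partial z^{e}$. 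Thus $(m-2)\,\partial f/\partial z^{e}=0$ for every $e$, and since $m\geq3$ we obtain $\partial f/\partial z^{e}=0$, so $f$ — and therefore $\rho_{0}=mf$ — is anti-para-holomorphic. The main obstacle is bookkeeping rather than ideas: one must check that taking covariant derivatives, lowering indices with $G$, and contracting all stay within the unbarred block and never couple to $D_{\overline a}G_{bc}$ or to the mixed curvature $R^{d}_{c,\overline a b}$; this is precisely what $L^{\overline c}_{ab}=L^{c}_{a\overline b}=0$ guarantees, so the standard derivation of the contracted Bianchi identity carries over verbatim. (The remaining part of the characteristic Einstein condition, ${\rm Ric}_{c\overline a}=0$, is not needed here; it only ensures that the characteristic Ricci tensor is genuinely proportional to $G$.)
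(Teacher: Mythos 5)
Your proof is correct, and it is essentially the argument the paper has in mind: the paper gives no proof of this theorem, deferring to the complex-case analogue in [Iv], and that reference proceeds exactly as you do, by running the Schur/twice-contracted-Bianchi argument inside the unbarred block where the characteristic connection is torsion-free and metric (conditions (i)--(iii) of Theorem~\ref{tm1} together with \eqref{m24} and the vanishing of $L^{\overline c}_{ab}$, $L^c_{a\overline b}$ and of $R^{\overline d}_{c,ab}$ guarantee, as you note, that no barred terms ever enter). Your bookkeeping of why the contraction $\rho_0=G^{ca}{\rm Ric}_{ca}=mf$ and the identity $G^{ab}D_a{\rm Ric}_{be}=\tfrac12 D_e\rho_0$ close up within the $(1,0)$ indices is the whole content of the proof, and your conclusion $(m-2)\,\partial f/\partial z^e=0$ with $m\geq 3$ correctly yields anti-para-holomorphicity of $\rho_0$.
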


\section{Para-holomorphic Riemannain Einstein manifolds}
\setcounter{equation}{0}

We recall that a (real) metric $g$ on the (real) manifold $M$ is said to be an \textit{Einstein metric} if
\begin{equation}
\label{e1}
{\rm Ric}(g)=\lambda g,
\end{equation}
where $\lambda$ is a real constant and ${\rm Ric}(g)$ denotes the Ricci tensor of the metric $g$.

In this section, we prove that by taking the real part of a para-holomorphic Einstein metric on a para-complex manifold $(M,I)$ of para-complex dimension $n$ one gets a real Einstein manifold of real dimension $2n$ obtaining a result similar to Theorem 5.1 from \cite{B-F-V} from the anti-K\"{a}hlerian manifolds case. 

Let $(M,G)$ be a para-holomorphic Riemannian manifold. Then, as we already noticed in the previous section the relations \eqref{a1} and \eqref{a2} establishes an one-to-one correspondence between the para-K\"{a}hler Norden metrics on the (real) manifold $(M,I)$ and the para-holomorphic metrics on the para-complex manifold $M$.

Although we can follow an argument similar from \cite{B-F-F-V, B-F-V}, for a better presentation of the notions that we use, in this section we denote the para-holomorphic Riemannain metric $G$ by $\widehat{g}$ and we follow an argument similar to \cite{O2, S} for K\"{a}hler-Norden manifolds.

Without loss of generality, we consider the (real) vector fields $X,Y,\ldots \in\Gamma(TM)$ such that $\widehat{X},\widehat{Y},\ldots \in\Gamma_{ph}(T^{1,0}M)$, are para-holomorphic vector fields on the para-complex manifold $(M,I)$, that is the relation \eqref{y3} holds. Then, we easily obtain
\begin{equation}
\label{e2}
[IX,Y]=[X,IY]=I[X,Y]\,,\,[IX,IY]=[X,Y]\,,\,[\widehat{X},\widehat{Y}]=\widehat{[X,Y]}=:[X,Y]^{\,\widehat{}}.
\end{equation}
Also, by a direct calculation, we have that for every para-complex function $f=Re\,f+eIm\,f$ on $M$, and every vector field $X\in\Gamma(TM)$, the following relation holds 
\begin{equation}
\label{e3}
f\widehat{X}=((Re\,f)X+(Im\,f)X)^{\,\widehat{}},
\end{equation}
and, moreover, if $f$ is para-holomorphic, then the para-Cauchy-Riemann equations imply
\begin{equation}
\label{e4}
X(Re\,f)=(IX)(Im\,f)\,,\,(IX)(Re\,f)=X(Im\,f)\,,\,\widehat{X}f=X(Re\,f)+eX(Im\,f).
\end{equation}
Now, for every real tangent space $T_{z,\mathbb{R}}M$, $z\in M$, we can choose an adapted orthonormal (real) frame $\{e_a,Ie_a\}$, $a\in\{1,\ldots,n\}$, such that
\begin{equation}
\label{e5}
g(e_a,e_b)=g(Ie_a,Ie_b)=\delta_{ab}\,,\,g(e_a,Ie_b)=0\,,\,a,b\in\{1,\ldots,n\}.
\end{equation}  
Then, we obtain an adapted para-complex frame $\{\widehat{e_a}\}$, $a\in\{1,\ldots,n\}$, for $\Gamma(T_z^{1,0}M)$, where $\widehat{e}_a=(1/2)(e_a+eIe_a)$ for which $\widehat{g}(\widehat{e}_a,\widehat{e}_b)=(1/2)\delta_{ab}$.

Let $\nabla$ and $\widehat{\nabla}$ be the Levi-Civita connections of the para-K\"{a}hler Norden metric $g$ and of the para-holomorphic Riemannian metric $\widehat{g}$, respectively. According to the discussion from the previous section, $\widehat{\nabla}$ is a para-holomorphic connection, and also, by the symmetry of $\nabla$ and using \eqref{e2}, we obtain
\begin{equation}
\label{e6}
\nabla_{IX}Y=I\nabla_XY\,,\,\,\forall\,X,Y\in\Gamma(TM).
\end{equation}
Let us consider now the Koszul formula which gives the Levi-Civita connections $\widehat{\nabla}$ of $\widehat{g}$
\begin{equation}
\begin{array}{ll}
2\widehat{g}(\widehat{\nabla}_{\widehat{X}}\widehat{Y},\widehat{Z})=\widehat{X}(\widehat{g}(\widehat{Y},\widehat{Z}))+\widehat{Y}(\widehat{g}(\widehat{X},\widehat{Z}))-\widehat{Z}(\widehat{g}(\widehat{X},\widehat{Y})) \\
\,\,\,\,\,\,\,\,\,\,\,\,\,\,\,\,\,\,\,\,\,\,\,\,\,\,\,\,\,\,\,\,\,\,\,\,\,\,\,-\widehat{g}([\widehat{X},\widehat{Z}],\widehat{Y})-\widehat{g}([\widehat{Y},\widehat{Z}],\widehat{X})+\widehat{g}([\widehat{X},\widehat{Y}],\widehat{Z}).
\end{array}
\label{e7}
\end{equation}
and similar, we can write this formula for the real metric $g$, with $\nabla$, $g$, $X,Y$ and $Z$, respectively.

Using \eqref{a2}, \eqref{e2} and \eqref{e4}, it follows
\begin{eqnarray*}
&&\widehat{X}\widehat{g}(\widehat{Y},\widehat{Z})=\frac{1}{2}(Xg(Y,Z)+eXg(Y,IZ)),\\
&&\widehat{g}([\widehat{X},\widehat{Y}],\widehat{Z})=\frac{1}{2}(g([X,Y],Z)+eg([X,Y],IZ)),\\
&&Zg(X,IY)=(IZ)g(X,Y).
\end{eqnarray*}
Now, by the above formulas, \eqref{e2} and the Koszul formula \eqref{e7} for the real metric $g$, the relation \eqref{e7} becomes
\begin{equation}
\label{e8}
2\widehat{g}(\widehat{\nabla}_{\widehat{X}}\widehat{Y},\widehat{Z})=g(\nabla_XY,Z)+eg(\nabla_XY,IZ)=2\widehat{g}(\widehat{\nabla_{X}Y},\widehat{Z}),
\end{equation}
which implies the following important relation
\begin{equation}
\label{e9}
\widehat{\nabla}_{\widehat{X}}\widehat{Y}=\widehat{\nabla_{X}Y}.
\end{equation}
In the sequel we consider the Riemann curvature tensors $R$ and $\widehat{R}$ of $\nabla$ and $\widehat{\nabla}$, respectively. Taking into account that $\nabla$ is almost para-complex, i.e $\nabla I=0$, and also using \eqref{e2} and \eqref{e6}, we obtain that $R$ is totally pure (or $I$-symmetric), that is (see also \cite{S-I-E})
\begin{equation}
\label{e10}
R(X,Y)I=R(IX,Y)=R(X,IY)=IR(X,Y).
\end{equation} 
By direct calculus, using \eqref{e2} and \eqref{e9}, it follows that the Riemann curvature tensors $R$ and $\widehat{R}$ are related by
\begin{equation}
\label{e11}
\widehat{R}(\widehat{X},\widehat{Y})\widehat{Z}=(R(X,Y)Z)^{\,\widehat{}}.
\end{equation}
Now, let us consider the Ricci tensor fields associated to the metrics $g$ and $\widehat{g}$, respectively, given by
\begin{equation}
\label{e12}
{\rm Ric}(g)(X,Y)={\rm Tr}\{Z\mapsto R(Z,X)Y\}\,\,{\rm and}\,\,{\rm Ric}(\widehat{g})(\widehat{X},\widehat{Y})={\rm Tr}\{\widehat{Z}\mapsto\widehat{R}(\widehat{Z},\widehat{X})\widehat{Y}\},
\end{equation}
and let us denote by $Q$ and $\widehat{Q}$ be the associated Ricci operators, given by
\begin{equation}
\label{e13}
g(QX,Y)={\rm Ric}(g)(X,Y)\,\,{\rm and}\,\,\widehat{g}(\widehat{Q}\widehat{X},\widehat{Y})={\rm Ric}(\widehat{g})(\widehat{X},\widehat{Y}).
\end{equation} 
We have 
\begin{proposition}The Ricci tensors ${\rm Ric}(g)$, ${\rm Ric}(\widehat{g})$ and the Ricci operators $Q$, $\widehat{Q}$ satisfy the following relations
\begin{equation}
\label{e14}
{\rm Ric}(g)(IX,Y)={\rm Ric}(g)(X,IY)\,,\,{\rm Ric}(g)(IX,IY)={\rm Ric}(g)(X,Y)\,,\,QI=IQ
\end{equation}
and
\begin{equation}
\label{e15}
{\rm Ric}(\widehat{g})(\widehat{X},\widehat{Y})=\frac{1}{2}({\rm Ric}(g)(X,Y)+e{\rm Ric}(g)(X,IY))\,,\,\widehat{Q}\widehat{X}=\widehat{QX}.
\end{equation}
\end{proposition}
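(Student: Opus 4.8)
The plan is to derive the first block \eqref{e14} straight from the totally pure identity \eqref{e10} for the curvature $R$ together with the para-Norden symmetry \eqref{m9}, by taking ordinary traces, and then to obtain \eqref{e15} by computing the $C$-linear trace that defines ${\rm Ric}(\widehat{g})$ in the adapted para-complex frame $\{\widehat{e}_a\}$ attached to \eqref{e5} and matching it against the real trace defining ${\rm Ric}(g)$.

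For \eqref{e14}, write ${\rm Ric}(g)(X,Y)={\rm Tr}\,\{Z\mapsto R(Z,X)Y\}$. By \eqref{e10} the two endomorphisms $Z\mapsto R(Z,IX)Y$ and $Z\mapsto R(Z,X)IY$ both equal $Z\mapsto I\,R(Z,X)Y$, hence have equal trace, which is precisely ${\rm Ric}(g)(IX,Y)={\rm Ric}(g)(X,IY)$; replacing $Y$ by $IY$ and using $I^{2}={\rm Id}$ gives ${\rm Ric}(g)(IX,IY)={\rm Ric}(g)(X,Y)$. Then $g(QIX,Y)={\rm Ric}(g)(IX,Y)={\rm Ric}(g)(X,IY)=g(QX,IY)=g(IQX,Y)$ by \eqref{m9}, and $QI=IQ$ follows from non-degeneracy of $g$.

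For \eqref{e15}, I would fix the adapted real orthonormal frame $\{e_a,Ie_a\}$ from \eqref{e5} and the associated $C$-basis $\{\widehat{e}_a\}$ of $T^{1,0}_{z}M$, for which $\widehat{g}(\widehat{e}_a,\widehat{e}_b)=\frac{1}{2}\delta_{ab}$. Since $T^{1,0}M$ is a free $C$-module of rank $n$ and these pairings are real (so invertible in $C$), the $C$-trace gives ${\rm Ric}(\widehat{g})(\widehat{X},\widehat{Y})=\sum_{a}2\,\widehat{g}\big(\widehat{R}(\widehat{e}_a,\widehat{X})\widehat{Y},\widehat{e}_a\big)$. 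By \eqref{e11} this is $\sum_a 2\,\widehat{g}\big((R(e_a,X)Y)^{\,\widehat{}},\widehat{e}_a\big)$, and expanding via \eqref{a2} turns each summand into $g(R(e_a,X)Y,e_a)+e\,g(R(e_a,X)Y,Ie_a)$. On the other hand, evaluating the real trace of $Z\mapsto R(Z,X)Y$ in the frame $\{e_a,Ie_a\}$ and using \eqref{e10} with \eqref{m9} to collapse the $Ie_a$-terms onto the $e_a$-terms shows $\sum_a g(R(e_a,X)Y,e_a)=\frac{1}{2}{\rm Ric}(g)(X,Y)$ and, after one more application of \eqref{e10}, $\sum_a g(R(e_a,X)Y,Ie_a)=\frac{1}{2}{\rm Ric}(g)(X,IY)$. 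Combining the two displays yields the claimed formula for ${\rm Ric}(\widehat{g})$, whereupon $\widehat{g}(\widehat{Q}\widehat{X},\widehat{Y})={\rm Ric}(\widehat{g})(\widehat{X},\widehat{Y})=\frac{1}{2}\big(g(QX,Y)+e\,g(QX,IY)\big)=\widehat{g}(\widehat{QX},\widehat{Y})$ by \eqref{a2}, and non-degeneracy of $\widehat{g}$ on $T^{1,0}M$ gives $\widehat{Q}\widehat{X}=\widehat{QX}$.

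The routine identity chases aside, the one step that actually requires care is the trace bookkeeping in \eqref{e15}: one must be explicit that ${\rm Ric}(\widehat{g})$ is the $C$-linear trace of $\widehat{R}(\cdot,\widehat{X})\widehat{Y}$ over the rank-$n$ free $C$-module $T^{1,0}M$ (so that the frame $\{\widehat{e}_a\}$, not $\{e_a,Ie_a\}$, is the correct index set), and then to use total pureness \eqref{e10} to reorganize the $2n$-term real trace of $R(\cdot,X)Y$ into the $n$ para-complex terms above without picking up stray factors of $2$. Everything else reduces to \eqref{e2}, \eqref{a2}, \eqref{e10} and \eqref{e11}.
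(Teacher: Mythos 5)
Your proof is correct and follows essentially the same route as the paper's: the identities \eqref{e14} are read off from total pureness \eqref{e10} together with \eqref{m9}, and \eqref{e15} is obtained by computing the real trace in the adapted frame $\{e_a,Ie_a\}$ and the $C$-trace in $\{\widehat{e}_a\}$, then matching them via \eqref{a2}, \eqref{e10} and \eqref{e11}. Your extra care with the factor $2$ coming from $\widehat{g}(\widehat{e}_a,\widehat{e}_b)=\tfrac12\delta_{ab}$ is exactly the bookkeeping the paper performs implicitly.
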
 
\begin{proof}
The relations \eqref{e14} follow directly from the defining relations \eqref{e12} and \eqref{e13} and using \eqref{e10}.

For the first relation of \eqref{e15}, using the orthonormal frame $\{e_a,Ie_a\}$, $a\in\{1,\ldots,n\}$, we have
\begin{displaymath}
{\rm Ric}(g)(X,Y)=\sum_a(g(R(e_a,X)Y,e_a)+g(R(Ie_a,X)Y,Ie_a))=2\sum_a(g(R(e_a,X)Y,e_a)),
\end{displaymath}
where we have also used \eqref{e10} and \eqref{m9}. Next, using the adapted para-complex frame $\{\widehat{e}_a\}$, $a\in\{1,\ldots,n\}$ and the formulas \eqref{a2}, \eqref{e10} and \eqref{e11} we obtain
\begin{eqnarray*}
{\rm Ric}(\widehat{g})(\widehat{X},\widehat{Y})&=&2\sum_a\widehat{g}(\widehat{R}(\widehat{e}_a,\widehat{X})\widehat{Y},\widehat{e}_a)=2\sum_a \widehat{g}((R(e_a,X)Y)^{\,\widehat{}},\widehat{e}_a)\\
&=&\sum_a(g(R(e_a,X)Y,e_a)+eg(R(e_a,X)Y,Ie_a))\\
&=&\sum_a(g(R(e_a,X)Y,e_a)+eg(R(e_a,X)IY,e_a)),
\end{eqnarray*}
which together with the previous equality implies the first relation of \eqref{e15}. This together with \eqref{a2} gives the following relation for the Ricci operators $Q$ and $\widehat{Q}$
\begin{eqnarray*}
\widehat{g}(\widehat{Q}\widehat{X},\widehat{Y})&=&{\rm Ric}(\widehat{g})(\widehat{X},\widehat{Y})=\frac{1}{2}({\rm Ric}(g)(X,Y)+e{\rm Ric}(g)(X,IY))\\
&=&\frac{1}{2}(g(QX,Y)+eg(QX,IY))=\widehat{g}(\widehat{QX},\widehat{Y}),
\end{eqnarray*}
which proves the second relation of \eqref{e15}.
\end{proof}
The first relation of \eqref{e15} leads to the announced result, that is
\begin{theorem}
\label{tIII1}
Let us suppose that $(M,I,g)$ is a para-K\"{a}hlerian Norden manifold, that is a para-complex  manifold of para-complex dimension $n$ endowed with a para-holomorphic Riemannian metric $\widehat{g}\equiv (\widehat{g}_{ab}(z))$, $a,b\in\{1,\ldots,n\}$ and with a real metric $g\equiv (g_{jk}(x))$, $j,k\in\{1,\ldots,2n\}$ given by $g=2Re\,\widehat{g}$. Then the para-holomorphic  metric $\widehat{g}$  is Einstein with the real constant $\lambda$ if and only if the real metric $g$ is Einstein metric with the same constant.
\end{theorem}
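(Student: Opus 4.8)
The plan is to compare the Einstein equations for $\widehat{g}$ and for $g$ directly, using the relation between their Ricci tensors established in the preceding proposition, namely $\mathrm{Ric}(\widehat{g})(\widehat{X},\widehat{Y})=\tfrac12\bigl(\mathrm{Ric}(g)(X,Y)+e\,\mathrm{Ric}(g)(X,IY)\bigr)$, together with the defining relation $\widehat{g}(\widehat{X},\widehat{Y})=\tfrac12\bigl(g(X,Y)+e\,g(X,IY)\bigr)$ from \eqref{a2}. Both formulas have exactly the same shape: a real part given by a symmetric $I$-invariant bilinear form evaluated on $(X,Y)$, and an $e$-part given by the same form evaluated on $(X,IY)$. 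This parallel structure is what makes the equivalence essentially formal once the proposition is in hand.

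First I would assume $\widehat{g}$ is Einstein with real constant $\lambda$, i.e. $\mathrm{Ric}(\widehat{g})=\lambda\,\widehat{g}$ as para-complex bilinear forms on $\Gamma(T^{1,0}M)$. Evaluating on an arbitrary pair $\widehat{X},\widehat{Y}$ with $X,Y\in\Gamma(TM)$ and substituting the two displayed formulas gives
\[
\tfrac12\bigl(\mathrm{Ric}(g)(X,Y)+e\,\mathrm{Ric}(g)(X,IY)\bigr)=\lambda\cdot\tfrac12\bigl(g(X,Y)+e\,g(X,IY)\bigr).
\]
Since $\lambda$ is real, comparing the real part $Re$ (the coefficient of $1$) on both sides yields $\mathrm{Ric}(g)(X,Y)=\lambda\,g(X,Y)$ for all $X,Y$, which is precisely \eqref{e1}; here one uses that every pair of real vector fields can be taken, after the standard reduction at the start of the section, to be of the para-holomorphic form so that $\widehat X,\widehat Y$ make sense, and that this suffices by $C$-bilinearity/pointwise tensoriality of both sides. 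Conversely, if $g$ is Einstein with constant $\lambda$, then substituting $\mathrm{Ric}(g)=\lambda g$ into the proposition's formula for $\mathrm{Ric}(\widehat{g})$ gives $\mathrm{Ric}(\widehat{g})(\widehat{X},\widehat{Y})=\tfrac12(\lambda g(X,Y)+e\lambda g(X,IY))=\lambda\,\widehat{g}(\widehat{X},\widehat{Y})$, so $\widehat{g}$ is Einstein with the same constant $\lambda$.

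The main point requiring care — the ``obstacle,'' though a mild one — is the reduction from abstract para-complex vector fields to the special real para-holomorphic frames used in \eqref{e15}: one must check that the identity $\mathrm{Ric}(\widehat g)=\lambda\widehat g$ on all of $\Gamma(T^{1,0}M)$ is genuinely equivalent to its restriction to vectors of the form $\widehat X$ with $X$ such that $\widehat X\in\Gamma_{ph}(T^{1,0}M)$. This follows because $\mathrm{Ric}(\widehat g)$ and $\widehat g$ are tensor fields, so their equality can be tested pointwise, and at each point $z$ the vectors $\{\widehat e_a\}$ coming from an adapted orthonormal real frame $\{e_a,Ie_a\}$ (as in \eqref{e5}) span $T_z^{1,0}M$ over $C$; a para-holomorphic extension of such a frame exists locally, so no generality is lost. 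I would state this reduction in one sentence and then carry out the two-line real/imaginary-part comparison above, noting finally that the realness of $\lambda$ is exactly what allows the real part of the Einstein equation for $\widehat g$ to decouple cleanly into the Einstein equation for $g$.
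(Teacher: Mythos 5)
Your proposal is correct and follows exactly the route the paper takes: the paper derives the theorem directly from the first relation of \eqref{e15} together with \eqref{a2}, comparing real and $e$-parts and using that $\lambda$ is real, just as you do. Your extra remark on reducing to para-holomorphic vector fields is a point the paper leaves implicit but does not change the argument.
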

\begin{remark}
We notice that starting from the original para-K\"{a}hlerian Norden metric $g$ on a para-complex manifold $(M,I)$, the real twin metric can be considered, that is $h(X,Y):=(g\circ I)(X,Y)=g(IX,Y)$, for every $X,Y\in\Gamma(TM)$. We find
\begin{equation}
\label{III9}
h(X,Y)=2{\rm Im}\widehat{g}(\widehat{X},\widehat{Y})\,,\,\,\forall\,X,Y\in\Gamma(TM).
\end{equation}
Moreover, if we denote by $\nabla$ the covariant differentiation of the Levi-Civita connection associated to the para-K\"{a}hlerian Norden metric $g$, then we have (see \cite{S-I-E})
\begin{equation}
\label{III10}
\nabla h=\nabla g\circ I+g\circ\nabla I=0.
\end{equation}
The above relation says that, the Levi-Civita connection of $g$ coincides with the Levi-Civita connection of $h$, thus they have the same real and para-complex Riemann and Ricci tensors (see also the discussion from the previous section). In the real case only one of two twin metrics can be Einsteinian. In para-complex case the Einstein condition ${\rm Ric}(\widehat{g})=\lambda \widehat{g}$ implies ${\rm Ric}(\widehat{h})=e\lambda \widehat{h}$, that is , both para-holomorphic metrics $\widehat{g}$ and $\widehat{h}$ are Einstein metrics at the same time. Also, we can conclude that the metric $h$ is an Einstein metric with an imaginary cosmological constant.  
\end{remark}
If the para-holomorphic metric $\widehat{g}$ is Einstein with para-complex constant $\widehat{\lambda}$, that is
\begin{equation}
\label{e18}
{\rm Ric}(\widehat{g})=\widehat{\lambda}\widehat{g},\,\widehat{\lambda}\in C,
\end{equation} 
then, similarly to the K\"{a}hler-Norden manifolds from the complex case, see \cite{O2}, we can describe the following generalization of Theorem \ref{tIII1}.

We consider the real scalar curvatures $K, K^*$ of $g$, and the para-holomorphic scalar curvature $\widehat{K}$ of $\widehat{g}$, that is
\begin{displaymath}
K={\rm Tr}Q\,,\,K^*={\rm Tr}(IQ)\,,\,\widehat{K}={\rm Tr}(\widehat{Q}).
\end{displaymath}
We have
\begin{proposition}
The real scalar curvatures $K$, $K^*$ and the para-holomorphic scalar curvature $\widehat{K}$ are related by
\begin{equation}
\label{e16}
\widehat{K}=\frac{1}{2}(K+eK^*).
\end{equation}
\end{proposition}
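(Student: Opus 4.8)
The plan is to evaluate each of the three traces $K$, $K^{*}$, $\widehat K$ in the adapted frames fixed above and then compare. First I would fix a point $z\in M$, an adapted orthonormal real frame $\{e_a,Ie_a\}_{a=1}^{n}$ as in \eqref{e5}, and the associated para-complex frame $\{\widehat e_a\}_{a=1}^{n}$ of $T_z^{1,0}M$, for which $\widehat g(\widehat e_a,\widehat e_b)=\tfrac12\delta_{ab}$. Since this normalization differs from the standard one by the factor $\tfrac12$, the trace of a $C$-linear endomorphism $A$ of $T_z^{1,0}M$ is recovered as ${\rm Tr}\,A=2\sum_{a}\widehat g(A\widehat e_a,\widehat e_a)$; in particular
\[
\widehat K={\rm Tr}(\widehat Q)=2\sum_{a}\widehat g(\widehat Q\widehat e_a,\widehat e_a).
\]

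Next I would rewrite $K$ and $K^{*}$ purely in terms of the real frame. Using $QI=IQ$ from \eqref{e14} together with $g(IX,IY)=g(X,Y)$ and $g(IX,Y)=g(X,IY)$ from \eqref{m9}, one checks that $g(QIe_a,Ie_a)=g(Qe_a,e_a)$ and $g(IQIe_a,Ie_a)=g(IQe_a,e_a)$, whence
\[
K={\rm Tr}\,Q=2\sum_{a}g(Qe_a,e_a),\qquad K^{*}={\rm Tr}(IQ)=2\sum_{a}g(IQe_a,e_a).
\]

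The key step is then the dictionary between the real and para-holomorphic data provided by the previous Proposition: the identity $\widehat Q\widehat e_a=\widehat{Qe_a}$ from \eqref{e15}, combined with the defining formula \eqref{a2} for $\widehat g$, gives
\[
\widehat g(\widehat Q\widehat e_a,\widehat e_a)=\widehat g(\widehat{Qe_a},\widehat e_a)=\tfrac12\bigl(g(Qe_a,e_a)+e\,g(Qe_a,Ie_a)\bigr).
\]
Summing over $a$, multiplying by $2$, and using once more $g(Qe_a,Ie_a)=g(IQe_a,e_a)$ (a consequence of \eqref{m9}), I obtain $\widehat K=\sum_a g(Qe_a,e_a)+e\sum_a g(IQe_a,e_a)=\tfrac12 K+\tfrac{e}{2}K^{*}$, which is exactly \eqref{e16}. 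The only point requiring care is the consistent bookkeeping of the normalization $\widehat g(\widehat e_a,\widehat e_b)=\tfrac12\delta_{ab}$ when passing from bilinear forms to operator traces; the computation is otherwise routine, and independence of the chosen frame is automatic since the trace is invariantly defined.
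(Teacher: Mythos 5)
Your argument is correct and follows essentially the same route as the paper's proof: the same adapted frames $\{e_a,Ie_a\}$ and $\{\widehat e_a\}$, the reduction of $K$, $K^*$, $\widehat K$ to sums over the frame via \eqref{e14} and \eqref{m9}, and the key identity $\widehat Q\widehat e_a=\widehat{Qe_a}$ from \eqref{e15} combined with \eqref{a2}. Your explicit remark about the factor $2$ coming from the normalization $\widehat g(\widehat e_a,\widehat e_b)=\tfrac12\delta_{ab}$ is a welcome clarification of a step the paper uses silently.
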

\begin{proof}
Using \eqref{e14} and \eqref{m9}, we obtain the following expressions for $K$ and $K^*$:
\begin{displaymath}
K=\sum_a(g(Qe_a,e_a)+g(QIe_a,Ie_a))=2\sum_a(g(Qe_a,e_a)),
\end{displaymath}
and
\begin{displaymath}
K^*=\sum_a(g(IQe_a,e_a)+g(IQIe_a,Ie_a))=2\sum_a(g(Qe_a,Ie_a)).
\end{displaymath}
Now, using \eqref{e15} and \eqref{a2}, we obtain
\begin{displaymath}
\widehat{K}=2\sum_a\widehat{g}(\widehat{Q}\widehat{e}_a,\widehat{e}_a)=2\sum_a\widehat{g}(\widehat{Qe_a},\widehat{e}_a)=\sum_a(g(Qe_a,e_a)+eg(Qe_a,Ie_a))
\end{displaymath}
which proves \eqref{e16}.
\end{proof}
Now, by applying the para-Cauchy-Riemann equations to the para-holomorphic function $\widehat{K}$ and taking into account that $Re\,\widehat{K}=K/2$ and $Im\,\widehat{K}=K^*/2$, we get
\begin{displaymath}
dK(X)=XK=(IX)K^*=dK^*(IX)\,\,{\rm and}\,\,dK(IX)=(IX)K=XK^*=dK^*(X),
\end{displaymath}
which implies
\begin{equation}
\label{e17}
d\widehat{K}(\widehat{X})=\frac{1}{2}(dK(X)+edK(IX)).
\end{equation}
Then, the following theorem, which is an analogue of Theorem 1 from \cite{O2} for K\"{a}hler-Norden manifolds, holds.
\begin{theorem}
\label{tIII2}
 The para-holomorphic Riemannian manifold $(M,\widehat{g})$ is para-holomorphic Einstein with para-complex constant $\widehat{\lambda}=\lambda_1+e\lambda_2$ iff
\begin{equation}
\label{e19}
{\rm Ric}(g)(X,Y)=\lambda_1g(X,Y)+\lambda_2g(X,IY).
\end{equation}
Moreover, in the formula \eqref{e19}, we have $\lambda_1=K/2n$ and $\lambda_2=K^*/2n$.
\end{theorem}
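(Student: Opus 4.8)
The plan is to reduce the statement to two facts already established in this section — formula \eqref{a2}, which expresses $\widehat{g}$ through the real metric $g$, and \eqref{e15}, which expresses ${\rm Ric}(\widehat{g})$ through ${\rm Ric}(g)$ — together with the trace computation behind \eqref{e16}, and then to carry out the elementary arithmetic of $C=\mathbb{R}+e\mathbb{R}$. Throughout, I read the Einstein condition \eqref{e18} in a para-holomorphic frame as ${\rm Ric}_{ab}=\widehat{\lambda}\,\widehat{g}_{ab}$, that is, as ${\rm Ric}(\widehat{g})(\widehat{X},\widehat{Y})=\widehat{\lambda}\,\widehat{g}(\widehat{X},\widehat{Y})$ for all $X,Y\in\Gamma(TM)$; this is the content of \eqref{e18} since both $\widehat{g}$ and ${\rm Ric}(\widehat{g})$ are determined by their restrictions to $\Gamma(T^{1,0}M)$. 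Note that no Schur-type argument is needed here: the definition \eqref{e18} already posits $\widehat{\lambda}\in C$, so $\lambda_1,\lambda_2$ are real constants from the outset.

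I would first prove the equivalence. Suppose \eqref{e18} holds with $\widehat{\lambda}=\lambda_1+e\lambda_2$, $\lambda_1,\lambda_2\in\mathbb{R}$. Substituting \eqref{e15} on the left, \eqref{a2} on the right, and multiplying out the para-complex product using $e^2=1$, one obtains
\[
\tfrac12\bigl({\rm Ric}(g)(X,Y)+e\,{\rm Ric}(g)(X,IY)\bigr)=\tfrac12\bigl(\lambda_1 g(X,Y)+\lambda_2 g(X,IY)\bigr)+\tfrac{e}{2}\bigl(\lambda_1 g(X,IY)+\lambda_2 g(X,Y)\bigr).
\]
Equating the real parts (using \eqref{a1}) gives exactly \eqref{e19}; equating the imaginary parts gives ${\rm Ric}(g)(X,IY)=\lambda_1 g(X,IY)+\lambda_2 g(X,Y)$, which is just \eqref{e19} with $Y$ replaced by $IY$, using $I^2={\rm Id}$ and \eqref{m9}, hence contains nothing new. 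Conversely, if \eqref{e19} holds with real constants $\lambda_1,\lambda_2$, then replacing $Y$ by $IY$ yields the companion identity as well, and reading the displayed equality from right to left gives ${\rm Ric}(\widehat{g})(\widehat{X},\widehat{Y})=(\lambda_1+e\lambda_2)\,\widehat{g}(\widehat{X},\widehat{Y})$ for all $X,Y$; since $T^{1,0}_zM=\{\widehat{X}\mid X\in T_zM\}$ and $\widehat{\lambda}:=\lambda_1+e\lambda_2$ is a para-complex constant, this is \eqref{e18}.

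For the "moreover" part I would take the para-complex trace of \eqref{e18}. In the adapted para-complex frame $\{\widehat{e}_a\}_{a=1}^{n}$ of $\Gamma(T_z^{1,0}M)$ one has $\widehat{g}(\widehat{e}_a,\widehat{e}_b)=\tfrac12\delta_{ab}$, whence ${\rm Tr}(\widehat{Q})=2\sum_a\widehat{g}(\widehat{Q}\widehat{e}_a,\widehat{e}_a)$; using ${\rm Ric}(\widehat{g})=\widehat{\lambda}\,\widehat{g}$ and $\dim_C M=n$ this gives $\widehat{K}={\rm Tr}(\widehat{Q})=2\widehat{\lambda}\sum_a\widehat{g}(\widehat{e}_a,\widehat{e}_a)=n\widehat{\lambda}$, so $\widehat{\lambda}=\widehat{K}/n$. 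Inserting \eqref{e16}, namely $\widehat{K}=\tfrac12(K+eK^*)$, and separating real and imaginary parts yields $\lambda_1=K/2n$ and $\lambda_2=K^*/2n$. The computations are routine throughout; the only point that needs a little care is the consistent handling of the factor $\tfrac12$ coming from \eqref{a2} and from the normalization $\widehat{g}(\widehat{e}_a,\widehat{e}_b)=\tfrac12\delta_{ab}$, since it is precisely this factor that turns the bare relation $\widehat{\lambda}=\widehat{K}/n$ into the coefficients $1/2n$ in front of $K$ and $K^*$ and that makes the real and imaginary parts match in the identity above.
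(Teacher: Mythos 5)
Your proof is correct and follows essentially the same route as the paper: substituting \eqref{e15} and \eqref{a2} into \eqref{e18} and separating real and imaginary parts (noting the imaginary part is \eqref{e19} with $Y\mapsto IY$) gives the equivalence, exactly as in the text. The only cosmetic difference is in the ``moreover'' step, where you trace the para-complex equation to get $\widehat{\lambda}=\widehat{K}/n$ and then invoke \eqref{e16}, whereas the paper reads off $QX=\lambda_1X+\lambda_2IX$ and computes $K={\rm Tr}\,Q=2n\lambda_1$, $K^*={\rm Tr}(IQ)=2n\lambda_2$ in the real frame $\{e_a,Ie_a\}$; since \eqref{e16} was itself proved by that same frame computation, the two arguments coincide in substance.
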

\begin{proof}
Taking into account the formula \eqref{a2} and the first relation of \eqref{e15}, we see that \eqref{e18} holds iff
\begin{eqnarray*}
{\rm Ric}(g)(X,Y)+e{\rm Ric}(g)(X,IY)=\lambda_1g(X,Y)+\lambda_2g(X,IY)+e(\lambda_2g(X,Y)+\lambda_1g(X,IY)),
\end{eqnarray*}
which is equivalent to \eqref{e19}. Moreover, using \eqref{e19}, it follows that the Ricci operator $Q$ satisfy
\begin{displaymath}
QX=\lambda_1X+\lambda_2IX\,\,{\rm and}\,\,IQX=\lambda_1IX+\lambda_2X.
\end{displaymath}
Now, the shape constants $\lambda_1$, $\lambda_2$ can be obtained from these two relations using the definitions of $K$ and $K^*$, respectively. Indeed, using the orthonormal frame $\{e_a,Ie_a\}$, $a\in\{1,\ldots,n\}$, we have
\begin{eqnarray*}
K&=&{\rm Tr}Q=\sum_a(g(Qe_a,e_a)+g(QIe_a,Ie_a)\\
&=&\sum_a(g(\lambda_1e_a+\lambda_2Ie_a,e_a)+g(\lambda_1Ie_a+\lambda_2e_a,Ie_a))\\
&=&\sum_a(\lambda_1g(e_a,e_a)+\lambda_1g(Ie_a,Ie_a))=2n\lambda_1
\end{eqnarray*}
and similarly, we obtain $K^*=2n\lambda_2$.
\end{proof}

\section{Para-complex Lie groups as para-holomorphic Riemannian Einstein manifolds}
\setcounter{equation}{0}

\begin{definition} 
A \textit{para-complex Lie group}, is a group $G$, which is also a para-complex manifold, such that the group multiplication $\phi:G\times G\rightarrow G,\,\phi(z,v)=z\cdot v$ and the inverse map
$z\in G\mapsto z^{-1}\in G$ are para-holomorphic. 
\end{definition}

Let $U$ be a coordinate neighborhood of the identity $u_G$ of an $m$-parameter para-complex Lie group $G$. The coordinates of $u_G$ are identified with $\{0,\dots,0\}\in C^m$, while the coordinates of elements of $z$, $v$, $w$ of $U$ will be denoted by $\{z^a\}$, $\{v^a\}$, $\{w^a\}$, respectively, $a, b, c,\ldots\in\{1,\ldots,m\}$. The map $\phi:G\times G \rightarrow G$ given by $w=\phi(z,v)$ is represented para-holomorphically by $m$ equations $w^\alpha=\phi^a(z,v)$, in which $\{\phi^a\}$ denotes a set of $r$ para-complex-valued para-holomorphic functions on $G\times G$, where $\phi^a(z,v)$ is an abbreviated notation for $\phi^a(z^1,\dots,z^m,v^1,\dots,v^m)$. Since $z=u_G\cdot z=z\cdot u_G$ for all $z\in G$, it follows that up to and including second order terms
\begin{equation}\label{1.1}
w^a=\phi^a(z,v)=z^a+v^a+A^{a}_{bc}z^b v^c+\ldots,
\end{equation}
where the $3$--index symbols $A^{a}_{bc}$ are para-complex constants (in a given para-holomorphic coordinate system) in terms of which the structure constants of $G$ are defined as
$C^{a}_{bc}=A^{a}_{bc}-A^{a}_{cb}$.

Let us denote
\begin{equation}\label{1.4}
\Phi^a_b(z,v)=\frac{\partial\phi^a(z,v)}{\partial z^b},\ \ \ \Psi^a_b(z,v)=\frac{\partial\phi^a(z,v)}{\partial v^b},
\end{equation}
so that by \eqref{1.1}
\begin{equation}\label{1.5}
\Phi^a_b(z,0)=\Psi^a_b(0,v)=\delta^a_b.
\end{equation}
The derivatives (\ref{1.4}) give rise to the definitions of the following para-holomorphic functions on $G$:
\begin{equation}
\label{1.6}
\stackrel{*}{\chi^a_b}(z)=\Phi^a_b(0,z), \, \chi^a_b(z)=\Psi^a_b(z,0),\, \lambda^a_b(z)=\Phi^a_b(z,z^{-1}), \, \stackrel{*}{\lambda^a_b}(z)=\Psi^a_b(z,z^{-1}),
\end{equation}
it being noted as a direct consequence of (\ref{1.5})
\begin{equation}
\stackrel{*}{\chi^a_b}(0)=\chi^a_b(0)=\lambda^a_b(0)= \stackrel{*}{\lambda^a_b}(0)=\delta^a_b.
\end{equation}

Using the same technique as in the real (complex) case, \cite{Go, Ru, I-Io}, we obtain that $\stackrel{*}{\chi_{b}^{a}}(z)=\widetilde{\lambda}_{b}^{a}(z)$, where  $\widetilde{\lambda}_{b}^{a}(z)$ denotes the elements of the para-holomorphic matrix that is inverse to $(\lambda_{b}^{a}(z))$ and $\stackrel{*}{\lambda_{b}^{a}}(z)=\widetilde{\chi}_{b}^{a}(z^{-1})$, where  $\widetilde{\chi}_{b}^{a}(z)$ denotes the elements of the para-holomorphic matrix that is inverse to $(\chi_{b}^{a}(z))$. Also, we can consider the left and right invariant para-holomorphic $1$-forms on the para-complex Lie group $G$ defined by $\widetilde{\chi}^{a}=\widetilde{\chi}_{b}^{a}(z)dz^{b}$ and $\lambda^{a}=\lambda_{b}^{a}(z)dz^{b}$, respectively. Then
\begin{equation}
\label{3.32}
\Gamma^{a}_{bc}(z)=\widetilde{\lambda}^a_d(z)\left(\frac{\partial \lambda^d_b(z)}{\partial z^c}+\frac{1}{2}C^{d}_{pq}\lambda^p_b(z)\lambda^q_c(z)\right)=\frac{1}{2}\widetilde{\lambda}^a_d(z)\left(\frac{\partial\lambda^d_b(z)}{\partial z^c}+\frac{\partial\lambda^d_c(z)}{\partial z^b}\right),
\end{equation}
defines the local coefficients of an unique torsion-free para-holomorphic connection on $G$.

The torsion-free para-holomorphic connection from \eqref{3.32} is always metric with respect to the para-holomorphic tensor field $g\in \left(T^{1,0}G\right)^*\otimes \left(T^{1,0}G\right)^*$ whose local components are given by
\begin{equation}
\label{4.12}
g_{ab}(z)=C_{pq}\lambda_{a}^{p}(z)\lambda_{b}^{q}(z),
\end{equation}
where $C_{ab}=C^{c}_{ad}C^{d}_{bc}$ are the para-complex Cartan-Killing elements of the para-complex Lie group $G$.  Moreover, if the para-complex Lie group $G$ is semi-simple, that is $\det g_{ab}\neq0$ (or equivalently $\det C_{ab}\neq0$), it is the only symmetric para-holomorphic connection for which this is the case.

\begin{remark}
\label{r1}
For the case of para-holomorphic metric tensor $g$ from \eqref{4.12} its symmetry is guaranted from the expression of para-complex Cartan-Killing elements $C_{ab}$. If $G$ is semi-simple then the para-holomorphic connection coefficients from \eqref{3.32} admit a representation in terms of the para-complex Christoffel symbols of \eqref{4.12}.
\end{remark}
\begin{remark}
\label{r2}
The para-holomorphic metric tensor $g_{ab}$ from \eqref{4.12} is not in general unique such that the torsion-free para-holomorphic connection from \eqref{3.32} is metric with respect to it, (see the construction from the real case \cite{Ru}). 
\end{remark}

As usual, the para-holomorphic curvature tensor of the torsion-free para-holomorphic connection from \eqref{3.32} must be specified as
\begin{equation}
\label{5.11}
R^{d}_{c,ab}=-\frac{1}{4}\widetilde{\lambda}_{f}^{d}C^{f}_{pq}C^{q}_{rs}\lambda_{c}^{p}\lambda^{r}_{a}\lambda^{s}_{b}.
\end{equation}
Then, the para-holomorphic Ricci tensor is 
\begin{equation}
\label{5.12}
R_{ca}=R^{b}_{c,ab}=-\frac{1}{4}C^{f}_{pq}C^{q}_{rf}\lambda_{c}^{p}\lambda_{a}^{r},
\end{equation}
or, in terms of para-complex Cartan-Killing elements
\begin{displaymath}
R_{ca}=-\frac{1}{4}C_{pr}\lambda_{c}^{p}\lambda_{a}^{r}.
\end{displaymath}
By comaparing this para-holmorphic tensor with the para-holomorphic metric tensor from \eqref{4.12} it is seen that the para-holomorphic Ricci tensor satisfies 
\begin{equation}
\label{5.13}
R_{ab}=-\frac{1}{4}g_{ab},
\end{equation}
which implies that every para-complex Lie group is locally \textit{para-holomorphic Einsteinian}.

Now, as well as we noticed, if $G$ is a semi-simple para-complex Lie group the para-holomorphic metric tensor from \eqref{4.12} is symmetric and nondegenerated. Thus, according to discussion from the Section 4 
\begin{equation}
\label{ds}
ds^2=2 Re\left[g_{ab}(z)dz^{a}\otimes dz^{b}\right]
\end{equation}
defines a para-K\"{a}hlerian Norden metric on $G$. Consequently, we have
\begin{theorem}
\label{t3}
Every semi-simple para-complex Lie groups is  a para-K\"{a}hlerian Norden Einstein space with respect to the para-K\"{a}hlerian Norden metric defined by \eqref{ds}.
\end{theorem}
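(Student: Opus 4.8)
The plan is to assemble the ingredients that are already in place rather than to compute anything new: equation \eqref{5.13} tells us that the para-holomorphic tensor field of \eqref{4.12} is para-holomorphic Einstein with the \emph{real} constant $\lambda=-\tfrac14$; semisimplicity makes this tensor a bona fide para-holomorphic Riemannian metric; the one-to-one correspondence recalled in Section 3 turns it into a para-K\"ahler Norden metric, which is exactly $ds^2$ of \eqref{ds}; and Theorem \ref{tIII1} transfers the Einstein property, with the same constant, to that real metric.

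First I would fix the para-holomorphic object. Write $\widehat{g}:=g_{ab}(z)\,dz^a\otimes dz^b$ with $g_{ab}(z)=C_{pq}\lambda^p_a(z)\lambda^q_b(z)$ as in \eqref{4.12}. These components are para-holomorphic functions on $G$ (products of the para-holomorphic entries $\lambda^a_b$ with the para-complex constants $C_{ab}$), and they are symmetric by Remark \ref{r1}. Here is where semisimplicity enters: $\det C_{ab}\neq 0$ together with the invertibility of $(\lambda^a_b)$ gives $\det(g_{ab})\neq 0$, so $\widehat{g}$, extended to all of $\Gamma(T_CG)$ so as to satisfy \eqref{m1} and \eqref{m2}, is a para-holomorphic Riemannian metric on the para-complex manifold $G$ in the sense of Section 3. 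One should also observe that, although \eqref{4.12} is written on a coordinate neighbourhood $U$ of the identity, it in fact defines a global tensor on $G$: the forms $\lambda^a=\lambda^a_b(z)\,dz^b$ are the left-invariant para-holomorphic Maurer--Cartan forms, so $\widehat{g}$ is the left-invariant extension of the para-complex Cartan--Killing form and is chart-independent; equivalently, \eqref{5.13} holds on all of $G$.

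Second, I would invoke the correspondence of Section 3: by the proposition there identifying para-holomorphic Riemannian metrics with para-K\"ahler Norden metrics via \eqref{a1}--\eqref{a2}, the real metric $g(X,Y)=2\,{\rm Re}\,\widehat{g}(\widehat{X},\widehat{Y})$ attached to $\widehat{g}$ through \eqref{a1} is a para-K\"ahler Norden metric on $(G,I)$, and in local para-holomorphic coordinates it is precisely $ds^2=2\,{\rm Re}\!\left[g_{ab}(z)\,dz^a\otimes dz^b\right]$ of \eqref{ds}. Then, by \eqref{5.13}, $\widehat{g}$ is para-holomorphic Einstein with the real constant $-\tfrac14$, so Theorem \ref{tIII1} applies verbatim and yields that the associated real para-K\"ahler Norden metric $ds^2$ is a (real) Einstein metric with the same constant, ${\rm Ric}(ds^2)=-\tfrac14\,ds^2$. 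Hence $(G,I,ds^2)$ is a para-K\"ahlerian Norden manifold which is Einstein, which is the assertion.

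The hard part will not be the Einstein conclusion — that is immediate from \eqref{5.13} plus Theorem \ref{tIII1} — but the structural point flagged above: verifying that \eqref{4.12} really is a well-defined, nondegenerate, para-holomorphic Riemannian metric on the whole group (so that \eqref{5.13}, and therefore the hypotheses of Theorem \ref{tIII1}, are valid globally and not merely near $u_G$). This is handled by left-invariance of the $\lambda^a$ and by $\det C_{ab}\neq 0$, i.e.\ exactly by the semisimplicity assumption; once it is settled, the rest is a direct citation chain through the one-to-one correspondence and Theorem \ref{tIII1}.
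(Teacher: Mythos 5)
Your argument is correct and takes essentially the same route as the paper, which likewise obtains the theorem by combining the Ricci identity \eqref{5.13}, the nondegeneracy of \eqref{4.12} guaranteed by semi-simplicity, the correspondence \eqref{a1}--\eqref{a2} between para-holomorphic Riemannian and para-K\"ahler Norden metrics, and Theorem \ref{tIII1}. The only cosmetic slip is that in the paper's conventions the forms $\lambda^a=\lambda^a_b(z)\,dz^b$ are the right-invariant (not left-invariant) para-holomorphic $1$-forms, which does not affect the argument.
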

Also, the following proposition holds.
\begin{proposition}
If the para-complex Lie group is semi-simple then its para-holomorphic curvature scalar is constant and it is given by
\begin{equation}
\label{5.14}
g^{ab}R_{ab}=-\frac{1}{4}\left(\dim_CG\right).
\end{equation}
\end{proposition}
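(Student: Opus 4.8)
The plan is simply to contract the local Einstein identity \eqref{5.13} with the inverse of the para-holomorphic metric. Recall that on any para-complex Lie group $G$ the torsion-free para-holomorphic connection \eqref{3.32} has para-holomorphic Ricci tensor \eqref{5.12}, and the comparison of \eqref{5.12} with the para-holomorphic metric tensor \eqref{4.12} --- both being expressed through the para-complex Cartan--Killing elements $C_{ab}=C^{c}_{ad}C^{d}_{bc}$ --- already yields the pointwise relation $R_{ab}=-\tfrac14 g_{ab}$ of \eqref{5.13}. When $G$ is semi-simple we have $\det(g_{ab})\neq 0$ (equivalently $\det(C_{ab})\neq 0$), so the matrix $(g_{ab})$ is invertible and its inverse $(g^{ab})$ is again a matrix of para-holomorphic functions.

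Multiplying \eqref{5.13} by $g^{ab}$ and summing over $a$ and $b$ gives
\[
g^{ab}R_{ab}=-\frac{1}{4}\,g^{ab}g_{ab}=-\frac{1}{4}\,\delta^{a}_{a}=-\frac{1}{4}\,m ,
\]
where $m$ is the number of parameters of $G$; by the dimension convention fixed in Section 2 one has $\dim_{C}G=m$, so this is precisely \eqref{5.14}. Since $m$ is a fixed positive integer, the right-hand side is a constant (a negative real number), hence the para-holomorphic curvature scalar of the connection \eqref{3.32} is constant, which is the remaining part of the assertion. (Alternatively, one may invoke the characteristic Einstein theorem of Section 3 with $f=-1/4$, which gives $\mathrm{Ric}_{ca}=(\rho_{0}/m)g_{ca}$ and $\rho_{0}=-m/4$; but the direct contraction above is shorter.)

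The argument is entirely formal once \eqref{5.13} is available, so no genuine difficulty arises; the only point that deserves a moment's care is the index bookkeeping. One contracts only the $(1,0)$-components, obtaining $g^{ab}g_{ab}=\delta^{a}_{a}=m$ rather than twice this value: the factor $2$ is exactly what distinguishes the para-holomorphic scalar $g^{ab}R_{ab}$ considered here from the full scalar curvature $\rho=G^{CA}\mathrm{Ric}_{CA}$ of \eqref{y2}, for which the same computation would instead give $\rho=-\tfrac12 m$.
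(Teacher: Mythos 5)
Your proof is correct and follows exactly the route the paper intends: the identity \eqref{5.14} is an immediate consequence of contracting the pointwise Einstein relation \eqref{5.13} with the inverse metric $g^{ab}$ (which exists precisely by the semi-simplicity hypothesis), yielding $g^{ab}R_{ab}=-\tfrac14\delta^a_a=-\tfrac14\dim_C G$, a constant. The side remarks on the factor of $2$ relative to the full scalar curvature $\rho$ are accurate but not needed.
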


Moreover, it is natural to consider the type $(0,4)$ para-holomorphic curvature tensor associated with \eqref{4.12} and \eqref{5.11} as
\begin{equation}
\label{5.15}
R_{abcd}=g_{df}R^{f}_{c,ab},
\end{equation}
and, the explicit expression of this para-holomorphic tensor is given by
\begin{equation}
\label{5.16}
R_{abcd}=-\frac{1}{4}C_{tf}C^{f}_{pq}C^{q}_{rs}\lambda_{a}^{p}\lambda_{b}^{t}\lambda_{c}^{r}\lambda_{d}^{s}.
\end{equation}
The \textit{para-holomorphic sectional curvature} $k(Z,W)$ of $G$ with respect a pair of para-holomorphic vector fields $Z,W\in\Gamma_{ph}(T^{1,0}(G))$ can be written in accordance with the standard formula
\begin{equation}
\label{5.18}
k(Z,W)(g_{ac}g_{bd}-g_{ad}g_{bc})Z^{a}Z^{c}W^{b}W^{d}=R_{abcd}Z^{a}Z^{c}W^{b}W^{d}.
\end{equation}
Finally, we notice that similarly to the real case \cite{Ru}, the following two theorems hold.
\begin{theorem}
\label{t5}
The para-holomorphic sectional curvature of a para-complex Lie group $G$ with respect to every pair of right-invariant para-holomorphic vector fields is constant. 
\end{theorem}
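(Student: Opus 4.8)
The plan is to evaluate both sides of the defining relation \eqref{5.18} on a pair of right-invariant para-holomorphic vector fields and to show that every quantity that occurs is independent of the base point $z\in G$, so that $k(Z,W)$ comes out as a fixed ratio of two para-complex constants built only from the structure constants and the para-complex Cartan--Killing elements.

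First I would recall that the right-invariant para-holomorphic $1$-forms $\lambda^a=\lambda^a_b(z)\,dz^b$ introduced before \eqref{3.32} form a global para-holomorphic coframe on $G$, whose dual frame consists of the right-invariant vector fields $\theta_a=\widetilde{\lambda}^b_a(z)\,\partial/\partial z^b$; since the inverse $(\widetilde{\lambda}^b_a(z))$ of a para-holomorphic matrix is again para-holomorphic, the $\theta_a$ are themselves para-holomorphic vector fields, so in this setting a right-invariant vector field is automatically para-holomorphic. Consequently an arbitrary right-invariant para-holomorphic vector field $Z$ has coordinate components $Z^a(z)=\zeta^b\widetilde{\lambda}^a_b(z)$ with $\zeta^b\in C$ constant, and one has the contraction identity $\lambda^p_a(z)Z^a(z)=\zeta^p$ with right-hand side independent of $z$; likewise $W^a(z)=\eta^b\widetilde{\lambda}^a_b(z)$ and $\lambda^q_b(z)W^b(z)=\eta^q$.

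Next I would substitute these into the curvature formula \eqref{5.16} and the metric formula \eqref{4.12}. Carrying out the four contractions of the $\lambda$'s against $Z$'s and $W$'s in \eqref{5.16} collapses $R_{abcd}Z^aZ^cW^bW^d$ to $-\tfrac14\,C_{tf}C^f_{pq}C^q_{rs}\,\zeta^p\eta^t\zeta^r\eta^s$, while the same contractions in \eqref{4.12} give $g_{ac}Z^aZ^c=C_{pq}\zeta^p\zeta^q$, $g_{bd}W^bW^d=C_{rs}\eta^r\eta^s$ and $g_{ad}Z^aW^d=C_{pq}\zeta^p\eta^q$, hence
\begin{equation*}
(g_{ac}g_{bd}-g_{ad}g_{bc})Z^aZ^cW^bW^d=(C_{pq}\zeta^p\zeta^q)(C_{rs}\eta^r\eta^s)-(C_{pq}\zeta^p\eta^q)^2,
\end{equation*}
where the symmetry of the Cartan--Killing elements (cf. Remark \ref{r1}) has been used. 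Both expressions are manifestly constant on $G$. Since $Z$ and $W$ span a nondegenerate para-holomorphic $2$-plane at each point, the displayed quantity is an invertible element of $C$, and \eqref{5.18} then determines $k(Z,W)$ uniquely as the quotient of the first constant by the second; in particular $z\mapsto k_z(Z,W)$ is constant on $G$. Independence of this quotient from the chosen basis $\{Z,W\}$ of the plane follows in the usual way from the algebraic symmetries of $R_{abcd}$.

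I do not expect a genuine obstacle here: the argument is essentially the index bookkeeping above, parallel to the real case in \cite{Ru}. The only points deserving a little care are the para-holomorphicity of the inverse coframe $(\widetilde{\lambda}^a_b)$ (which is what makes right-invariant vector fields para-holomorphic) and the clean verification that contracting such a vector field against $\lambda^a_b(z)$ yields a constant, since this is precisely what renders the numerator and denominator of \eqref{5.18} base-point independent.
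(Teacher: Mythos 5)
Your argument is correct and is exactly the contraction bookkeeping the paper has in mind: it states Theorem \ref{t5} by appeal to the real case in \cite{Ru}, where the proof is precisely that a right-invariant field satisfies $\lambda^p_a(z)Z^a(z)=\zeta^p=\mathrm{const.}$, so that both sides of \eqref{5.18} collapse to expressions in the structure constants and Cartan--Killing elements alone. Your two points of care (para-holomorphicity of $\widetilde{\lambda}^a_b$ and invertibility of the denominator in the ring $C$ for a nondegenerate plane) are the right ones.
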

\begin{theorem}
\label{t6}
The covariant derivatives of the components of $R^{d}_{c,ab}$ with respect to the torsion-free para-holomorphic connection from \eqref{3.32} vanish identically.
\end{theorem}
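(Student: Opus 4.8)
The plan is to reduce the assertion to the classical fact that the canonical torsion-free connection of a Lie group has parallel curvature tensor --- a fact whose proof is nothing but two applications of the Jacobi identity --- and to carry out that argument inside the para-holomorphic frame formalism of this section. Since both the connection \eqref{3.32} and the curvature \eqref{5.11} involve only the ``holomorphic-type'' quantities $L^{c}_{ab}=\Gamma^{c}_{ab}$ and $R^{d}_{c,ab}$, the whole computation stays inside $T^{1,0}G$ and is formally identical to the real and complex cases treated in \cite{Ru, I-Io}.

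First I would compute the covariant derivative of the left-invariant para-holomorphic coframe $\lambda^{a}=\lambda^{a}_{b}(z)\,dz^{b}$ with respect to \eqref{3.32}. Writing $D_{c}\lambda^{a}_{b}=\partial_{c}\lambda^{a}_{b}-\Gamma^{d}_{bc}\lambda^{a}_{d}$, using the first expression for $\Gamma^{a}_{bc}$ in \eqref{3.32} and the fact that $(\widetilde{\lambda}^{a}_{b})$ is the inverse matrix of $(\lambda^{a}_{b})$, the $\partial_{c}\lambda^{a}_{b}$ term cancels and one is left with the clean identity
\[
D_{c}\lambda^{a}_{b}=-\tfrac12\,C^{a}_{pq}\,\lambda^{p}_{b}\,\lambda^{q}_{c},
\]
equivalently $D_{X_{e}}\lambda^{a}=-\tfrac12\,C^{a}_{pe}\,\lambda^{p}$ and, dually, $D_{X_{e}}X_{b}=\tfrac12\,C^{a}_{be}\,X_{a}$, where $\{X_{a}\}$ is the left-invariant frame dual to $\{\lambda^{a}\}$; that is, on left-invariant fields $D$ is one half of the Lie bracket, just as for the canonical connection. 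Differentiating $\widetilde{\lambda}^{a}_{c}\lambda^{c}_{b}=\delta^{a}_{b}$ then gives $D_{e}\widetilde{\lambda}^{b}_{a}=\tfrac12\,\widetilde{\lambda}^{b}_{c}\,C^{c}_{aq}\,\lambda^{q}_{e}$.

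Next I would rewrite the curvature \eqref{5.11} in the invariant frame: contracting $R^{d}_{c,ab}$ with $\widetilde{\lambda}$ and $\lambda$ gives $R(X_{a},X_{b})X_{c}$ equal to a fixed quadratic expression in the structure constants times $X_{f}$ --- up to the sign conventions of \eqref{5.11}, this is $-\tfrac14\,[[X_{a},X_{b}],X_{c}]$ --- so $R$ has constant components in $\{X_{a}\}$. Applying $D_{X_{e}}$ and the Leibniz rule, using the above formula for $D_{X_{e}}X_{\bullet}$ on the value $X_{f}$ and on the three arguments, $(D_{X_{e}}R)(X_{a},X_{b})X_{c}$ becomes a fixed cubic polynomial in the structure constants times $X_{f}$; after collecting the four terms its coefficient is, up to an overall nonzero constant, the combination
\[
[W,[[X_{a},X_{b}],X_{c}]]-[[[W,X_{a}],X_{b}],X_{c}]-[[X_{a},[W,X_{b}]],X_{c}]-[[X_{a},X_{b}],[W,X_{c}]],\qquad W:=X_{e},
\]
which vanishes by two uses of the Jacobi identity: first $[[W,X_{a}],X_{b}]+[X_{a},[W,X_{b}]]=[W,[X_{a},X_{b}]]$ collapses the middle two terms, and then $[W,[A,X_{c}]]=[[W,A],X_{c}]+[A,[W,X_{c}]]$ with $A=[X_{a},X_{b}]$ kills what remains. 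Hence $D_{X_{e}}R=0$, and since $\{X_{a}\}$ is a frame this is exactly $D_{e}R^{d}_{c,ab}=0$ in an arbitrary para-holomorphic coordinate system, which is the statement.

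I do not expect a genuine obstacle. The step most likely to need care is the first one, namely the Maurer--Cartan type relation $\partial_{c}\lambda^{a}_{b}-\partial_{b}\lambda^{a}_{c}=-C^{a}_{pq}\lambda^{p}_{b}\lambda^{q}_{c}$ that underlies the second equality in \eqref{3.32} (and hence the clean form of $D\lambda^{a}$); but this is already contained in the construction of \eqref{3.32} in the para-holomorphic setting, in parallel with the real case of \cite{Ru, I-Io}. Thus the substantive content of the theorem is the single algebraic identity among the structure constants furnished by the Jacobi identity, and everything else is index bookkeeping in the left-invariant para-holomorphic frame.
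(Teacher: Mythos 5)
Your argument is correct. The paper itself gives no proof of this theorem, merely asserting it ``similarly to the real case \cite{Ru}'', and your computation is precisely the standard argument that citation stands for: from \eqref{3.32} one gets $D_{c}\lambda^{a}_{b}=-\tfrac12 C^{a}_{pq}\lambda^{p}_{b}\lambda^{q}_{c}$, hence in the invariant frame dual to $\lambda^{a}$ the connection acts by half the structure constants and \eqref{5.11} has constant components, so $D R$ reduces to the cubic bracket combination you display, which vanishes because $\mathrm{ad}$ is a derivation of the bracket (the Jacobi identity). The only cosmetic slip is that in the paper's conventions $\lambda^{a}$ is the right-invariant coframe ($\widetilde{\chi}^{a}$ is the left-invariant one), but this affects nothing, since your computation uses only \eqref{3.32}, \eqref{5.11} and the inverse-matrix relation between $\lambda$ and $\widetilde{\lambda}$.
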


\noindent 
Cristian Ida, Alexandru Ionescu and Adelina Manea\\
Department of Mathematics and Computer Science\\
University Transilvania of Bra\c{s}ov\\
Address: Bra\c{s}ov 500091, Str. Iuliu Maniu 50,  Rom\^{a}nia\\
 email:\textit{cristian.ida@unitbv.ro; alexandru.codrin.ionescu@gmail.com; amanea28@yahoo.com}
\medskip

\end{document}